\newcommand\ocM{\overline{\cM}}
\newcommand\SP{\operatorname{SP}}
\newcommand\cSP{\operatorname{\cS\cP}}
\newcommand\us{^{\rm s}}
\newcommand\uss{^{\rm ss}}
\newcommand\uperm{^{\rm p}}
\newcommand\eps{\epsilon}
\newcommand\Sym{\operatorname{Sym}}
\newcommand\sign{\operatorname{sign}}
\newcommand\bry{_{\rm bry}}
\newcommand\inr{_{\rm int}}
\newcommand\hor{^{\rm hor}}
\newcommand\ver{^{\rm ver}}
\newcommand\cam{_{\rm Cam}}
\newcommand\ubur{^{\rm Bur}}
\newcommand\tor{^{\rm tor}}
\title{Kappa classes on KSBA spaces}
\date{September 28, 2023} 
\author{Valery Alexeev}
\email{valery@uga.edu}
\address{Department of Mathematics, University of Georgia, Athens GA 30602, USA}
\begin{document}
\begin{abstract}
  We define kappa classes on moduli spaces of KSBA stable
  varieties and pairs, generalizing the Miller-Morita-Mumford classes
  on moduli of curves, and compute them in some cases where the
  virtual fundamental class is known to exist, including Burniat and
  Campedelli surfaces.  For Campedelli surfaces, an intermediate step
  is finding the Chow (same as cohomology) ring of the GIT quotient
  $(\bP^2)^7//SL(3)$.
\end{abstract}

\maketitle
%\tableofcontents

\section{Introduction}
\label{sec:intro}

The Miller-Morita-Mumford (MMM) classes, or kappa classes, are some
very basic objects in enumerative geometry of the moduli spaces
$\oM_{g,n}$ of stable curves. For example, according to Mumford's
conjecture proved by Madsen and Weiss \cite{madsen2007stable-moduli},
the stable cohomology group of $M_g$ is
$\bQ[\kappa_1,\kappa_2,\dotsc]$.  These classes were introduced by
Mumford in \cite{mumford1983towards-enumerative}.  Morita
\cite{morita1987characteristic-classes} defined equivalent classes on
$M_g$ from a topological point of view, and Miller
\cite{miller1986homology-mapping} showed that
$\bQ[\kappa_1,\kappa_2,\dotsc]$ embeds into the stable cohomology of
$M_g$ in degrees $\le g/3$.

In \cite{donaldson2020fredholm-topology} Donaldson asked if it were
possible to extend enumerative geometry of $\oM_g$ to the KSBA
spaces, the moduli spaces of stable varieties which are
higher-dimensional analogues of stable curves. He outlined a
definition of the virtual fundamental class on the moduli space of
stable surfaces, which was subsequently developed by Jiang
\cite{jiang2022virtual-fundamental}.

Below, we extend the definition of the kappa classes to the KSBA
spaces and ask some basic questions about them. The rest of the paper
is devoted to computing them in several cases where the moduli spaces
of stable surfaces are known explicitly, such as products of curves
\cite{vanopstall2006stable-degenerations1}, Campedelli surfaces
\cite{alexeev2009explicit-compactifications} and Burniat surfaces
\cite{alexeev2009explicit-compactifications,
  alexeev2023explicit-compactifications}.  We work over $\bC$.

\begin{acknowledgements}
  % This work was largely prompted by the papers of Donaldson and Jiang
  % \cite{donaldson2020fredholm-topology,
  %   jiang2022virtual-fundamental}.
  I thank Michel Brion, Yunfeng Jiang and Yuji Odaka for helpful
  conversations, and especially Bill Graham for teaching me the basics
  of equivariant cohomology. The author was partially supported by NSF
  under DMS-2201222.
\end{acknowledgements}

\section{Definition of kappa classes}
\label{sec:kappa}

A KSBA-stable pair $(X, D=\sum_i a_iD_i)$ consists of an
equidimensional variety~$X$ and integral Weil divisors $D_i$ taken
with rational coefficients $0<a_i\le 1$, such that $X$ is deminormal
(and in particular has only double crossings in codimension~$1$), $D_i$
are Mumford divisors (so do not contain components of the double locus
of $X$), the pair $(X,D)$ has semi log canonical singularities, and
the divisor $K_X+D$ is ample.

We refer to Koll\'ar's
\cite[Def.\ 8.13]{kollar2023families-of-varieties} for the definition of
the moduli functor which is quite delicate and involves an important
notion of a K-flat family of Mumford divisors.  The main result of
\cite{kollar2023families-of-varieties} is that after fixing the basic
invariants: dimension $d=\dim X$, the coefficient set $\bm{a} = (a_i)$
and the volume $\nu = (K_{X}+D)^d$, this moduli functor admits
a projective coarse moduli space $\SP(\bm{a}, d, \nu)$. The moduli
stack $\cSP(\bm{a}, d, \nu)$ is a proper Deligne-Mumford stack. 

\begin{lemma}
  For any family $f\colon (X,D)\to S$ of KSBA-stable pairs, there
  exists a well defined $\bQ$-line bundle $K_{X/S}+D$ on $X$ which is
  functorial, i.e. compatible with base change $S'\to S$.
\end{lemma}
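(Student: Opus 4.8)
The plan is to produce, for a suitable positive integer $N$, an honest line bundle $L$ on $X$ together with isomorphisms $L|_{X_s}\cong\mathcal{O}_{X_s}(N(K_{X_s}+D_s))$ on all fibres, compatible with base change, and then to set $K_{X/S}+D:=\tfrac1N L\in\operatorname{Pic}(X)\otimes\bQ$. Independence of the auxiliary $N$ will follow from uniqueness of reflexive hulls: if $N$ and $N'$ both work, then $L_N^{\otimes N'}$ and $L_{N'}^{\otimes N}$ are each the $S_2$-hull of $\omega_{X/S}^{\otimes NN'}(NN'D)$ on the relative Gorenstein locus, hence isomorphic, so the resulting $\bQ$-class is well defined.

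First I would isolate the open subset $U\subset X$ on which $f$ is Gorenstein and every $D_i$ is Cartier. Since $X$ is deminormal it is $S_2$ and, at each codimension-$1$ point, is either regular or has a double crossing; as a double crossing is a hypersurface singularity, $X$ is Gorenstein in codimension $1$, and since a Mumford divisor does not meet the double locus in codimension $1$, each $D_i$ is Cartier at every codimension-$1$ point as well. The analogous statements hold on the fibres, so $X\setminus U$ has codimension $\ge 2$ in $X$ and in every $X_s$. On $U$ the relative dualizing sheaf $\omega_{U/S}$ --- the lowest nonvanishing cohomology sheaf of $f^!\mathcal{O}_S$ up to the shift by the relative dimension $d$ --- is a line bundle whose formation commutes with arbitrary base change; likewise $\mathcal{O}_U(ND|_U)$ is a line bundle once every $Na_i$ is an integer. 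Taking $N$ divisible by all denominators of the $a_i$ and by the Cartier index $N_0=N_0(\bm a,d,\nu)$ of $K+D$ --- finite by the boundedness of slc pairs with these invariants underlying the construction of $\SP(\bm a,d,\nu)$ recalled above, and a legitimate uniform choice because $d$, $\bm a$, $\nu$ are locally constant on $S$ --- we obtain a line bundle $L_U:=\omega_{U/S}^{\otimes N}(ND|_U)$ on $U$ restricting to $\mathcal{O}_{U_s}(N(K_{U_s}+D_s))$ on each fibre.

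Next I would extend $L_U$ and prove the extension is invertible. Put $L:=j_*L_U$ for the open immersion $j\colon U\hookrightarrow X$; as $X$ is $S_2$ with $X\setminus U$ of codimension $\ge2$, $L$ is the relative $S_2$-hull of $\omega_{X/S}^{\otimes N}(ND)$, i.e.\ the divisorial sheaf $\omega_{X/S}^{[N]}(ND)$. The crucial point --- which I expect to be the main obstacle --- is that the formation of this divisorial sheaf commutes with base change, equivalently that $L|_{X_s}$ coincides with the divisorial sheaf $\omega_{X_s}^{[N]}(ND_s)=\mathcal{O}_{X_s}(N(K_{X_s}+D_s))$ on each fibre. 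This is genuinely not formal --- $S_2$-hulls do not in general commute with restriction to fibres --- and it is exactly what the definition of the moduli functor in \cite[Def.\ 8.13]{kollar2023families-of-varieties} is engineered to provide: K-flatness of the families of Mumford divisors $D_i$ governs the sheaves attached to $\sum\lfloor ma_iD_i\rfloor$, and the accompanying flatness and base-change condition on the relative log-canonical sheaf handles $\omega_{X/S}^{[m]}$. Granting this, $L$ is $S$-flat, and each $L|_{X_s}\cong\mathcal{O}_{X_s}(N(K_{X_s}+D_s))$ is invertible because $N$ clears the index and $K_{X_s}+D_s$ is $\bQ$-Cartier by stability; a coherent sheaf on the flat morphism $f$ that is $S$-flat and invertible on every fibre is invertible, as one checks at a point of a fibre and concludes by Nakayama using flatness over $S$.

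Finally there is functoriality, which is the easy part. Both inputs are preserved by a base change $S'\to S$: the Gorenstein relative dualizing sheaf on $U$ trivially is, and K-flatness of the $D_i$ is stable under base change by its definition, so $L$ pulls back to the analogous sheaf of $(X_{S'},D_{S'})\to S'$; dividing by $N$ gives the asserted base-change compatibility of $K_{X/S}+D$. In practice the write-up would consist mostly of quoting \cite[\S 8]{kollar2023families-of-varieties}; the only additional bookkeeping is the boundedness input used to select a single $N$ over all of $S$.
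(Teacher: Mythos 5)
Your construction is essentially the paper's own proof: pass to the big open subset $U$ where $f$ is Gorenstein and the $D_i$ are (relatively) Cartier, push forward $\omega_{U/S}^{\otimes N}(ND)$ to get an invertible $L_N$, set $K_{X/S}+D=\frac1N L_N$, and check independence of $N,U$ and base-change compatibility; the paper simply quotes the existence of such a $U$ and the invertibility/base-change of $L_N$ as known from Koll\'ar's framework, which is exactly the content you delegate to K-flatness and boundedness. So the proposal is correct and follows the same route, only with more of the supporting justification spelled out.
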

\begin{proof}
It is known that
there exists an open subset $j\colon U\to X$ such that
\begin{enumerate}
\item For any fiber $X_s$, one has $\codim X_s\setminus U_s\ge2$.
\item $U\to S$ is Gorenstein and fibers have at worst simple double
  crossings.
\item The divisors $D_i|_U$ are Cartier and lie in the smooth locus of $U$.
\item For some $N\in\bN$, $Na_i\in\bZ$ and the sheaf
  $L_N:=j_*\big(\omega_{U/S}^{\otimes N}(ND)\big)$ is invertible. 
\end{enumerate}
Define $K_{X/S}+D:=\frac1{N}L_N$.  This definition is
independent of taking further multiples of $N$ and choosing another
open subset $U$ with the above properties.

For any base change $S'\to S$, the opens set 
$j'\colon U'=U\times_S S' \to X'=X\times_S S'$ has the same properties
and $L_N'=j'_*\big(\omega_{U'/S'}^{\otimes N}(ND')\big)$ is the
pullback of $L_N$, because formation of
$\omega_{U/S}$ and $\cO_U(D_i)$ commutes with base changes.
\end{proof}

\begin{corollary}
  On the universal family $(\cX,\cD)\to \cSP(\bm{a}, d, \nu)$ over the
  moduli stack there is a canonical $\bQ$-line bundle $K_{\cX/\cSP}+\cD$.
\end{corollary}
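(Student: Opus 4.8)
The plan is to obtain the canonical $\bQ$-line bundle on the universal family over the stack by applying the Lemma to the universal family itself and then checking that the construction descends along the atlas, i.e. that it is a genuinely stack-theoretic object rather than one depending on a presentation. First I would recall that $\cSP(\bm{a},d,\nu)$ is a Deligne--Mumford stack, so it admits a smooth surjective atlas $p\colon T\to\cSP$ with $T$ a scheme, and the universal family pulls back to a family $f_T\colon (X_T,D_T)\to T$ of KSBA-stable pairs over the scheme $T$. The Lemma, applied verbatim to $f_T$, produces a well-defined $\bQ$-line bundle $K_{X_T/T}+D_T$ on $X_T$. This already gives the desired class after pullback to any scheme mapping to $\cSP$; what remains is to package these compatibly.

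Next I would invoke descent. The key point is the functoriality clause of the Lemma: for the two projections $\mathrm{pr}_1,\mathrm{pr}_2\colon T\times_{\cSP} T\to T$, both pullbacks $\mathrm{pr}_1^*(K_{X_T/T}+D_T)$ and $\mathrm{pr}_2^*(K_{X_T/T}+D_T)$ agree, as $\bQ$-line bundles on $X_{T\times_{\cSP}T}$, with the bundle $K_{X_{T\times T}/(T\times T)}+D_{T\times T}$ constructed directly from the fiber product family — because that family is the base change of $f_T$ along either projection, and formation of $\omega$ and of $\cO(D_i)$, hence of the sheaf $L_N = j_*(\omega_{U/S}^{\otimes N}(ND))$, commutes with base change as shown in the proof of the Lemma. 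One then checks the cocycle condition on the triple fiber product $T\times_{\cSP}T\times_{\cSP}T$ by the same base-change argument. Since a $\bQ$-line bundle is, after clearing the fixed denominator $N$, just a line bundle together with an identification, and line bundles satisfy fppf (a fortiori smooth) descent, this descent datum is effective and yields a canonical $\bQ$-line bundle on $\cX$, which we denote $K_{\cX/\cSP}+\cD$.

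I do not expect a serious obstacle here: the entire content has been front-loaded into the Lemma, whose base-change compatibility is exactly the gluing input needed for descent. The only mild subtlety worth a sentence is that $\cX$ is itself a Deligne--Mumford stack (not a scheme), so one should note that the open substack $\cU\subset\cX$ with the four listed properties can be chosen functorially — it is the locus where the fibers are Gorenstein with at worst double crossings and the $\cD_i$ are Cartier in the smooth locus, an open condition stable under base change — and then $L_N$ is defined directly on $\cX$ by pushing forward from $\cU$, with no need to invoke an atlas at all. Either route works; I would present the direct one and remark that it agrees with the atlas-descent construction by the uniqueness built into the Lemma.
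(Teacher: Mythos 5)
Your argument is correct and is essentially the paper's (implicit) one: the paper states the corollary without separate proof, taking it as an immediate consequence of the base-change compatibility in the Lemma, which is exactly the descent input you spell out via an atlas and the cocycle check on $T\times_{\cSP}T$. Making the smooth/\'etale descent of the bundles $L_N$ explicit, and noting the alternative of constructing $L_N$ directly on $\cX$ from the open substack $\cU$, is a faithful elaboration rather than a different route.
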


Mumford \cite{mumford1983towards-enumerative} defined the kappa
classes $\kappa_i$ on $\ocM_g$ as the pushforwards of the cycles
$K_{\cX/\ocM_g}^{i+1}$ in the universal family
$f\colon \cX\to \ocM_g$.  Similarly, Arbarello and Cornalba
\cite{arbarello1996combinatorial, arbarello1998calculating-cohomology}
defined $\kappa_i$ on $\ocM_{g,n}$ as the pushforwards of 
$(K_{\cX/\ocM_{g,n}}+~\cD)^{i+1}$ in the universal family
$f\colon (\cX,\cD=\sum_{k=1}^n\cD_k)\to \ocM_{g,n}$. In both case we
are greatly helped by the fact that $\ocM_g$ and $\ocM_{g,n}$ are
smooth Deligne-Mumford stacks, so $\kappa_i$ can be considered to be
cocycles in $A^i(\ocM_{g,n})_\bQ$ and $H^{2i}(\ocM_{g,n},\bQ)$.

We would like to define the kappa classes on KSBA spaces similarly, as
the pushforwards of $(K_{\cX/\cSP}+\cD)^{i+d}$. The question is: in
what generality does this definition make sense and which properties
does it have? We propose several versions.

\begin{definition}[$\kappa_0$ and $\kappa_1$]
  Obviously, for any family $\kappa_0$ can be defined simply as
  $\nu=(K_{X_s}+D_s)^d\in\bQ$, the volume of a general fiber.
  $\kappa_1 = f_*(K_{X/S}+D)^{d+1}$ is known as the CM line
  bundle, see e.g. \cite{fine2006note-on-positivity,
    wang2014nonexistence-asymptotic, patakfalvi2017ampleness-cm}
\end{definition}

\begin{definition}[Cycle version]
  For any family $f\colon (X,D)\to S$ with equidimensional base
  $S$, define the cycles $\kappa_i(S)\in A_{\dim S-i}(S)$ as proper
  pushforwards 
  \begin{displaymath}
    \kappa_i(S) = f_*\left(
      (K_{X/S} + D)^{i+d} \cap [X] \right)
    \quad\text{under}\quad
    \quad f_*\colon A_{m}(X) \to A_{m}(S),
  \end{displaymath}
  where $m = \dim X-i-d = \dim S-i$. Consider a commutative square
  \begin{center}
    \begin{tikzcd}
      (X,D) \arrow{d}[swap]{f} & (X',D')\arrow{d}{f'} \arrow{l}[swap]{g'} \\
      S & S' \arrow{l}{g}
    \end{tikzcd}    
  \end{center}
  $g\colon S'\to S$ is a proper generically finite morphism of
  degree $e$, with reduced $S,S'$.
  Then $g'$ is also a proper generically finite morphism
  of degree $e$. We have that $g'_*[X'] = e[X]$, and by the projection formula
  \begin{eqnarray*}\label{eq:com-square}
    e \kappa_i(S) &=& 
                      f_*\left( (K_{X/S} + D)^{i+d} \cap g'_* [X']\right)\\
                  &=&
                      f_*g'_* \left( (K_{X'/S'} + D')^{i+d} \cap [X'] \right) \\
                  &=& g_* f'_* \left( (K_{X'/S'} + D')^{i+d} \cap [X'] \right)
                      = g_* \kappa_i(S')
  \end{eqnarray*}

  The same definition and with the same functoriality also works for
  the morphisms of DM stacks using the intersection theory on stacks
  \cite{vistoli1989intersection-theory}. In particular, let $\cSP'$ be
  an irreducible component of $\cSP_{\rm red}$, or its normalization.
  Then we get cycles
  $\kappa_i(\cSP')\in A_{\dim\cSP'-i}(\cSP')$ and 
  $\kappa_i(\SP')\in A_{\dim\SP'-i}(\SP')$ on its coarse moduli space.
\end{definition}

\begin{definition}[Smooth moduli stack version]
  \label{def:smooth-stack}
  If $\cSP'$ is a smooth (necessarily proper) Deligne-Mumford stack
  then, just as for $\ocM_{g,n}$ we can identify the group
  $A_{\dim(\cSP')-i}(\cSP')_\bQ$ with $A^i(\cSP')_\bQ$ and define
  $\kappa_i$ in $A^i(\cSP')_\bQ$. If $\cSP'$ is a global quotient of a
  smooth projective variety $[V:\Gamma]$ by a finite group then
  $A^i(\cSP')_\bQ = A^i(V)_\bQ^\Gamma$. Using the cycle map, we also
  get kappa classes in 
  $H^{2i}(\cSP',\bQ) = H^{2i}(V,\bQ)^\Gamma$.
\end{definition}

\begin{definition}[Lci morphisms]
  Suppose that the morphism $f\colon X\to S$ is lci (e.g., families of
  stable curves are lci) and that $S$ is smooth and proper.  Using the
  identification $A_{\dim S-i}(S) = A^i(S)$ we get the kappa classes
  in $A^i(S)$. If $g\colon S'\to S$ is an arbitrary morphism from
  another smooth and proper variety $S'$, the functoriality of the
  refined Gysin homomorphism in homology
  \cite{fulton1984intersection-theory} implies that for a base change
  $g\colon S'\to S$ one has $\kappa_i(S') = g^*\kappa_i(S)$. In
  particular, we get kappa classes on a resolution of singularities of
  $\cSP'$ in a functorial way.
\end{definition}

\begin{definition}[Cohomological version]
  Without assuming that $f$ is lci, for any family of stable pairs
  $f\colon (X,D)\to S$ over a smooth, not necessarily proper $S$, we
  can use Gysin pushforward $H^i(X,\bQ)\to H^i(S,\bQ)$ defined as the
  composition
  \begin{displaymath}
    H^{2(i+d)}(X) \xrightarrow{\cap [X]} H^{\rm BM}_{2\dim X-2i-2d}(X)
    \xrightarrow{f_*} H^{\rm BM}_{2\dim S-2i}
    \isoto H^{2i}(S),
  \end{displaymath}
  where $H^{\rm BM}_*$ is Borel-Moore homology, see
  e.g. \cite[App.~B]{fulton1997young-tableaux}. I don't know if this
  Gysin pushforward has enough functorial properties to imply 
  $\kappa_i(S') = g^*\kappa_i(S)$.   
\end{definition}

\begin{definition}[Almost lci morphisms]
  Few KSBA-stable surfaces have
  lci singularities. But for most of them the index-$1$ Gorenstein
  covers are lci. An important idea of Jiang
  \cite{jiang2022virtual-fundamental} is to utilize the DM stack of
  index-$1$ covers to define the virtual fundamental class of
  $\SP$. One may use the same idea here and define $\kappa_i$ as 
  Gysin pushforward of the pullback of $(K_{X/S}+D)^{i+d}$ from $X$ to
  its index-$1$ covering stack.  
\end{definition}

At this point an educated reader will certainly think of many other
ways to define kappa classes in cohomology, e.g. using operational
Chow rings and bivariant theories, \'etale Borel-Moore
homology, etc. I think all of them deserve a serious consideration.

In what follows I assume that we have well-defined kappa classes
in cohomology, in one of the above ways. In the three examples considered
later in this paper the moduli stacks are smooth and we use
Definition~\ref{def:smooth-stack} to compute $\kappa_i$.

\medskip

The semipositivity results for families of stable pairs
\cite{fujino2018semipositivity-theorems,
  kovacs2017projectivity-moduli} imply that $K_{X/S} + D$ is nef, by a
standard argument. It follows that whenever $\kappa_i$ are defined in
cohomology, they are nef.  In fact, $\kappa_1$, i.e. the CM line
bundle, is known to be ample.  This is true for $\oM_g$ by
\cite{mumford1977stability-of-projective}, for $\oM_{g,n}$ by
\cite{cornalba1993on-the-projectivity} and in general by
\cite{patakfalvi2017ampleness-cm}.

\medskip 

A special and very interesting case is the KSBA compactification of
the moduli of log Calabi-Yau pairs $(X,\Delta+\epsilon B)$ such that
generically $K_X+\Delta \sim_\bQ 0$ and $B$ is $\bQ$-Cartier and
ample. By \cite{kollar2019moduli-of-polarized,
  birkar2023geometry-polarized} after fixing basic numerical
invariants there exists $\epsilon_0>0$ such that for any
$0<\epsilon <\epsilon_0$ the compactification for the stable pairs
$(X,\Delta+\epsilon B)$ does not depend on $\epsilon$. Some concrete
cases are the compactified moduli spaces of toric and abelian
varieties \cite{alexeev2002complete-moduli} and of K3 surfaces
\cite{alexeev2023compact}.

\begin{definition}
  For a family $f\colon (X,\Delta+\epsilon B)\to S$ of KSBA-stable log
  Calabi-Yau pairs with $0<\epsilon\ll 1$, the generalized Hodge
  $\bQ$-line bundle $\lambda$ is defined by the condition
  $K_{X/S} + \Delta = f^*(\lambda)$. By functoriality this defines 
  $\bQ$-line bundles on the moduli stack and on its coarse moduli space.
\end{definition}

Obviously, $\kappa_i(\epsilon)$ are polynomials in $\epsilon$,
$\lambda$ and $f_* (B^{i+d})$. For example,
\begin{displaymath}
  \kappa_1(\epsilon) / \epsilon^d = 
  (n+1)\nu(B) \lambda+ \epsilon f_*B^{d+1},
\end{displaymath}
where $\nu(B) = B_s^d$ is the volume of a general fiber.  Nefness of
$\kappa_1(\epsilon)$ for $0<\epsilon\ll1$ implies that $\lambda$ is
nef as well.

\begin{conjecture}
  For KSBA-stable log Calabi-Yau pairs, $\lambda$ is semiample.
\end{conjecture}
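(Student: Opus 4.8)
As noted above $\lambda$ is nef, so the conjecture asserts that $|m\lambda|$ is base point free for $m$ sufficiently divisible. The natural strategy is to identify $\lambda$ with the \emph{moduli part} of a canonical bundle formula and to reduce the statement to a geometric instance of Ambro's conjecture. The family $(\cX,\Delta)\to\cSP$ is an lc-trivial fibration: fiberwise $K+\Delta\sim_\bQ 0$, and the defining relation $K_{\cX/\cSP}+\Delta=f^*\lambda$ presents $K_{\cX}+\Delta$ as a pullback from the base. In the Kawamata--Ambro canonical bundle formula $K_{\cX}+\Delta\sim_\bQ f^*(K_{\cSP}+B_{\cSP}+M_{\cSP})$ the discriminant $B_{\cSP}$ vanishes --- over a prime divisor $P\subset\cSP$ the fiber $f^{*}P$ is a stable, hence reduced and slc, pair, and inversion of adjunction makes $(\cX,\Delta+f^{*}P)$ slc there --- so $\lambda\sim_\bQ M_{\cSP}$ is exactly the moduli part. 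The conjecture then becomes the assertion that the moduli $b$-divisor $\mathbf M$ is $b$-semiample (the Prokhorov--Shokurov/Ambro conjecture), together with the expectation that the KSBA compactification itself (on each component, or its normalization) is already a model on which $\mathbf M$ descends and is semiample.

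To prove $b$-semiampleness in this geometric situation I would use the variation of Hodge structure carried by the family. On the open locus $\cSP^\circ$ parametrizing normal (klt) log Calabi--Yau fibers, the results of Kawamata and Fujino identify $\lambda|_{\cSP^\circ}$ with the Deligne canonical extension of the relevant Hodge bundle, and there is a period map from $\cSP^\circ$ to a period domain --- or to a moduli space of the relevant Hodge structures --- under which $\lambda$ is the pullback of a natural line bundle, semiample and in the locally symmetric case Baily--Borel-ample. The plan is then to show that this period map extends to a morphism from a suitable modification of $\cSP$ to the corresponding Baily--Borel-type compactification, carrying $\lambda$ to the pullback of the ample class, and that this semiampleness descends to $\cSP$ by the rigidity lemma (using that $\lambda$ is nef and is trivial on the contracted fibers). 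Along the boundary of $\cSP$ the fibers acquire non-normal, strictly semi-log-canonical degenerations, so the pure Hodge structure must be replaced by the limiting mixed Hodge structure, and one controls the extension via the nilpotent- and $SL_2$-orbit asymptotics.

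The main obstacle is precisely this last step: controlling $\lambda$ along the deepest strata of the KSBA boundary, where the degenerate fiber is glued from several components along non-normal double loci and the associated period/Torelli-type map need not a priori extend or remain finite. This is essentially open in general and is the crux of Ambro's conjecture in the presence of semi-log-canonical fibers. I would therefore first establish the conjecture in the cases where the compactification is completely understood, using the period map directly: for principally polarized abelian varieties $\lambda$ is the Hodge line bundle and its semiampleness is the classical statement that Siegel modular forms define the Satake--Baily--Borel compactification of $\mathcal A_g$ (compare the KSBA picture of \cite{alexeev2002complete-moduli}); for toric and ``broken toric'' log Calabi--Yau degenerations $\lambda$ reduces to an explicit piecewise-linear datum whose semiampleness is elementary; and for polarized K3 surfaces one uses the morphism from the Alexeev--Engel compactification \cite{alexeev2023compact} to the Baily--Borel compactification, under which $\lambda$ is the pullback of its ample generator. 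The general case would then follow from the canonical-bundle-formula reduction together with a sufficiently strong extension theorem for period maps across non-normal KSBA boundary strata.
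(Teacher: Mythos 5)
You should be aware that the paper does not prove this statement at all: it is stated as a conjecture, and the only evidence offered is the list of special cases (toric pairs, where $\lambda=0$; abelian and K3 pairs, where $\lambda$ is pulled back from the ample Hodge bundle on the Satake--Baily--Borel compactification; and degenerations of $(\bP^2,D)$ via \cite{ascher2023moduli-boundary}). Your sketch reproduces exactly these cases, so on that ground you are consistent with the paper, but what you have written is not a proof of the conjecture and cannot be graded as one.

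The genuine gap is that your argument bottoms out in statements that are themselves open, and you say so yourself. The reduction of $\lambda$ to the moduli part of a canonical bundle formula is plausible (and the vanishing of the discriminant over a divisor $P$ is fine when the family is locally stable over a normal base, though you would still need to deal with non-normal or non-reduced components of the moduli space, where the Kawamata--Ambro formula is not directly available), but the conclusion you then need is precisely the Prokhorov--Shokurov/Ambro $b$-semiampleness conjecture, and moreover in a regime --- slc total space, non-normal fibers over the deep KSBA strata, period maps replaced by limiting mixed Hodge structures --- where even less is known than in the klt lc-trivial case. The step ``show the period map extends to a Baily--Borel-type compactification and that semiampleness descends by rigidity'' is exactly the hard content of the conjecture, not a tool you can invoke; outside the locally symmetric cases (abelian, K3) there is in general no period domain with an ample automorphic bundle to map to. So the proposal is a reasonable research program and correctly identifies where the difficulty sits, but as a proof it has a hole exactly at the point the paper flags by calling the statement a conjecture.
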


For toric pairs $(X,\Delta + \epsilon B)$ with toric boundary
$\Delta$, one has $\lambda=0$. For abelian and K3 pairs
$(X,\epsilon B)$, $\lambda$ is the pullback of the ample Hodge bundle
on the Satake-Baily-Borel compactification. So it is true in these
cases. This was also proved for degenerations of pairs $(\bP^2,D)$ in
\cite{ascher2023moduli-boundary}.

\section{Products of curves}
\label{sec:products}

Consider surfaces of the form $X=C_g\times C_h$ which are products of
smooth curves of genus $g$ and $h$. Obviously, the stable limits of
one-parameter degenerations of such surfaces are products of two
stable curves. By van Opstall \cite{opstall2005moduli-of-products},
there is an irreducible component of the moduli of
stable surfaces isomorphic to $\oM_g \times \oM_h$ if
$g\ne h$, or a quotient of it by an involution if $g=h$.  
\cite{vanopstall2006stable-degenerations1} further extends this
construction to finite quotients of $C_g\times C_h$.

For a universal family $\cX$ over the stack $\ocM_g\times\ocM_h$ the
line bundle $\omega_{\cX/\cSP}$ is simply
$p_1^*(\omega_{\cC_g/\ocM_g}) + p_2^*(\omega_{\cC_h/\ocM_h})$ and the
kappa classes on $\oM_g\times\oM_h$ are merely appropriate sums of
monomials in the pullbacks of kappa classes from $\oM_g$ and
$\oM_h$. So this case is reduced to $\oM_g$.  The cases of quotients
of $C_g\times C_h$ can be treated similarly, on the stack level there
is not much difference.

\section{Kappa classes on moduli of $\bZ_2^k$-covers}
\label{sec:covers}

In the next two sections we treat the cases of Campedelli and Burniat
surfaces described in \cite{alexeev2009explicit-compactifications} and
\cite{alexeev2023explicit-compactifications}. These are surfaces of
general type that are certain branched $\bZ_2^k$-covers $(k=2,3)$ of
pairs $(Y, \frac12 D)$, $D=\sum_i D_i$. The stable surfaces on the
boundary are $\bZ_2^k$-covers of stable pairs $(Y,\frac12 D)$. In each case,
the compactified coarse moduli space of surfaces $X$ is a finite
quotient of the compactified fine moduli space for the pairs $(Y,\frac12 D)$
by a symmetry group permuting the labels of $D_i$'s.

Any family $f\colon X\to S$ after a finite base change $S'\to S$ can
be written as a $\bZ_2^k$-cover $X'\to (Y',\frac12 D')$, where
$X' = X\times_S S'$, $(Y',D') = (Y,D)\times_S S'$, and the $\bQ$-line
bundle $\omega_{X'/S'}$ is the pullback of
$\omega_{Y'/S'}(\frac12 D')$. Thus, the kappa classes for the covers
$X$ are proportional to the kappa classes for the pairs
$(Y,\frac12 D)$ by some multiples that are powers of $2$.  So it is
enough to study the kappa classes for the pairs $(Y,\frac12 D)$, which
we do in the next two sections.

\section{Burniat surfaces}
\label{sec:burniat}

% \subsection{Recollection}
% \label{sec:recall} 

Burniat surfaces are certain surfaces of general type of degree
$3\le d=K_X^2\le 6$ with $p_g=q=0$ which can be obtained as
$\bZ_2^2$-covers of degree-$d$ del Pezzo surfaces ramified in a set of
$12$ curves coming from a particular configuration of lines in
$\bP^2$.

Primary Burniat surfaces are those of degree~$6$,
they are covers of Cremona surface
$\Sigma = \Bl_3\bP^2$.  Secondary Burniat surfaces have degrees
$5$ and $4$, they are $\bZ_2^2$-covers of del Pezzo surfaces of degrees
$5$ and $4$ obtained by further blowups of $\Sigma$ at the points
where some three of the $12$ curves pass through the same point.

An explicit KSBA compactification of the moduli space of primary
Burniat surfaces was described in
\cite{alexeev2009explicit-compactifications}. Using it, explicit KSBA
compactifications for the moduli of secondary Burniat surfaces were
described in \cite{alexeev2023explicit-compactifications}.

For Burniat surfaces of degrees $6$ and $5$ and for the non-nodal
Burniat surfaces of degree~$4$, the above papers give compactifications
of the entire irreducible components in the moduli space of surfaces
of general type. In the nodal degree~$4$ and $3$ cases they are
closed subsets of irreducible components or larger dimensions. We 
do not discuss the nodal cases here.

As was pointed out to me by Yunfeng Jiang, for numerical
applications $4$ and $5$ are the most interesting degrees.
Indeed, for degree $d$ the dimension of the compactification $\oM\ubur_d$
is $d-2$. On the other hand, by \cite{donaldson2020fredholm-topology,
  jiang2022virtual-fundamental} the dimension of the virtual
fundamental class is $10-2d$. Thus, for $d=6$ the virtual fundamental
class is zero, and for $d=5$ it is a multiple of a point.

Below, we consider the degree $4$ non-nodal case. Here, the virtual
fundamental case has dimension~$2$ and coincides with
$[\oM_4\ubur]$. Below, we compute the kappa classes on $\oM_4\ubur$.

\subsection{Degree $6$}
\label{sec:burniat6}

Burniat surfaces $X_6$ of degree~$6$ are $\bZ_2^2$-covers of Cremona
surface $Y_6\tor = \Sigma=\Bl_3\bP^2$ ramified in a configuration of
$12$ curves shown in the left panel of Figure~\ref{fig-deg64}. A
$\bZ_2^2$-cover is determined by three divisors $R,G,B$ satisfying
certain conditions, see \cite{alexeev2009explicit-compactifications}.
We use the primary colors red, green and blue to draw them.  In this
case, $R=\sum_{i=0}^3 R_i$, $G=\sum_{i=0}^3 G_i$,
$B=\sum_{i=0}^3 B_i$. The curves with $i=0,3$ form the toric boundary
$D\bry$ of $\Sigma$. The curves with $i=1,2$ form the interior divisor
$D\inr$. The total branch divisor of $X_6\to Y_6$ is
$D_6=D\bry+D\inr = R+G+B$.

\begin{figure}[htbp]
  \centering
  \includegraphics{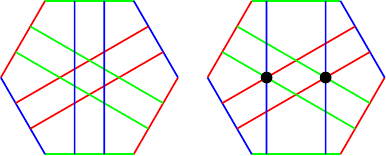} 
  \caption{Burniat configurations of degree 6 and 4 non-nodal cases}
  \label{fig-deg64}
\end{figure}

\cite{alexeev2009explicit-compactifications} constructs a compactified
moduli space for the pairs $(Y_6,\frac12 D_6)$, which we will denote
by $\oM_6$ here. It comes with a universal family
$(\cY_6\tor,\frac12 \cD_6) \to \oM_6$. Then the compactified moduli
space $\oM_6\ubur$ of degree $6$ Burniat surfaces is the quotient of
$\oM_6$ by a finite group $S_3\ltimes S_2^4$ shuffling the labels of
$R_i,G_i,B_i$.

In more detail, $(\cY_6,\frac12 \cD_6)\to \oM_6$ is obtained from an
explicit morphism of toric varieties $\cY_6\tor\to\oM_6\tor$ by a
series of smooth blowups, followed by a contraction to the relative
canonical model. The morphism $\oM_6\to \oM_6\tor$ is a composition of
a blowup $\rho_1$ at the central point
$1\in\bC^*{}^4\subset\oM_6\tor$, followed by a blowup $\rho_2$ along
six disjoint $\bP^1$. A family $\cY'_6\to\oM_6$ is obtained from
$\cY_6\tor$ by doing the base changes under $\rho_1,\rho_2$ and
additional smooth blowups in the fibers. On $\cY_6'$ the divisor
$K_{\cY_6'/\oM_6} + \frac12 D_6$ is relatively big and nef over
$\oM_6$. The universal family $\cY_6\to\oM_6$ is its relative
canonical model.

The boundary divisor $\cD\bry\tor$ is the union of the boundary curves
on the fibers, it is the horizontal part of the toric boundary of
$\cY_6\tor$. The interior divisor $\cD_6\tor$ on $\cY_6\tor$ is
constructed in \cite[Sec.\ 4]{alexeev2009explicit-compactifications}
as follows. In addition to the map $p_1\colon\cY_6\tor\to\oM_6\tor$,
there a second projection, a birational morphism
$p_2\colon\cY_6\tor\to V_{P_6}$ to a projective toric variety
$V_{P_6}$ defined by a lattice polytope $P_6$ that is the convex hull
of a $46$-point set $A_6$. Under this projection, the fibers $Y_6$
become closed subvarieties of $V_{P_6}$. Then $\cD\inr$ is a section
of $p_2^*\cO_{V_{P_6}}(2) \otimes p_1^*\cO_{\oM_6}(-F)$ for a certain
effective divisor $F$ on $\oM_6$ that is defined in the proof of
\cite[Prop.~4.20]{alexeev2009explicit-compactifications}.

\subsection{Degree~$4$}
\label{sec:burniat4}

Non-nodal Burniat surfaces of degree~$4$ are defined as follows. One
considers the special configurations for which the triples of the
curves $(R_1,G_1,B_1)$ and $(R_2,G_2,B_2)$ pass through common points,
as in the right panel of Figure~\ref{fig-deg64}.  Let $Y\to\Sigma$ be
the blowup at these points. The strict preimages of $R,G,B$ give a
$\bZ_2^2$-cover $\pi\colon X\to Y$ that is a Burniat surface of
degree~$4$. Note: the exceptional divisors are not included in the
branch divisor of $\pi$.

The compactified moduli space $\oM_4\ubur$ of degree~$4$ Burniat
surfaces was constructed in
\cite{alexeev2023explicit-compactifications} as an
$S_3\ltimes S_2^2$-quotient of the compactified moduli space $\oM_4$
of pairs $(Y,\frac12\cD)$, as follows. There exists a closed
subvariety $Z\subset\oM_6$, a complete intersection of two divisors,
over which the curves $(R_1,G_1,B_1)$ and $(R_2,G_2,B_2)$ are
incident.  The degenerate pairs appearing in this family are shown in
the upper row of Figure~\ref{fig-deg4}. The restricted family
$\cY_6|_Z$ comes with two disjoint sections $s_1,s_2$. Let $\cY'\to Z$
be the blowup of $\cY_6|_Z$ along $s_1,s_2$. 

The variety $Z$ is the strict preimage of a toric variety
$Z\tor \subset \cY\tor$ under the blowup $\rho_1$. It turns out that
$Z\tor\simeq\Sigma$ and $Z=\Bl_1\Sigma$. The divisor
$K_{\cY'/Z}+\frac12\cD'$ is relatively nef over $Z$; let $\cY''$ be its
relative canonical model. The degenerate fibers appearing in
$\cY''\to Z$ are shown in the lower row of Figure~\ref{fig-deg4}.
They are: a union of two $\bP^1\times\bP^1$ glued along the
diagonal, a union of four $\bP^2$, and another union of two
$\bP^1\times\bP^1$ with a different configuration of branch divisors.

\begin{figure}[htbp]
  \centering
  \includegraphics[width=300pt]{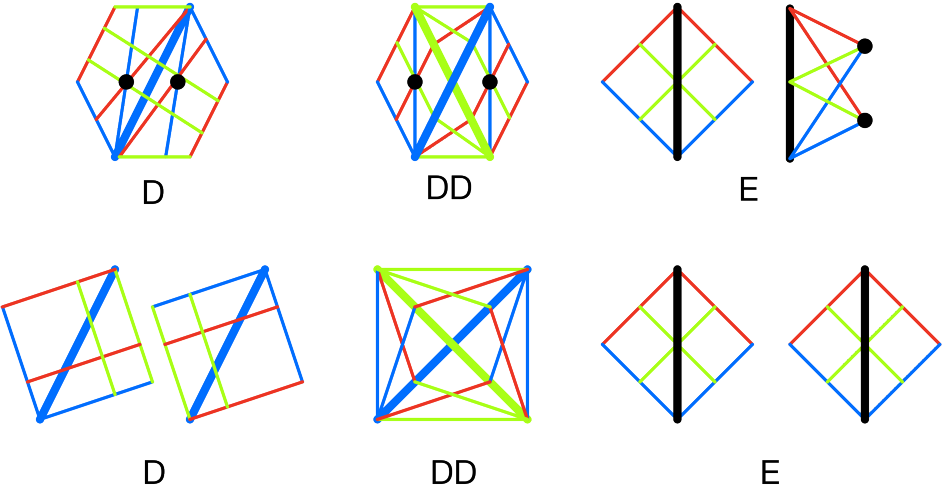} 
  \caption{Surfaces over $Z\subset\oM_6$ of degrees $6$ and $4$}
  \label{fig-deg4}
\end{figure}

Over the exceptional divisor of $\Bl_1\Sigma\to\Sigma$ (the divisor of
type E) all the fibers are isomorphic. Thus the family
$(\cY'',\frac12\cD'') \to Z$ descends to a family
$(\cY,\frac12\cD)\to \oM_4=Z\tor=\Sigma$.
Thus, the final family $\cY\to\Sigma$ is obtained from the toric family
$\cY\tor\to\Sigma$ as follows.  There is a sequence of smooth blowups
\begin{equation}\label{eq:blowups}
  \cY\tor = \cY_0
  \xleftarrow{\beta_1} \cY_1
  \xleftarrow{\beta_2} \cY_2
  \xleftarrow{\beta_3} \cY_3
  \xleftarrow{\beta_4} \cY_4 = \cY',
  \qquad\text{in which}
\end{equation}
\begin{enumerate}
\item $\beta_1$ is the blowup of the fiber $(\cY_6\tor)_1\simeq\Sigma$
  of $\cY_6\tor$ over $1\in\Sigma$. This is the base change $\cY_1 =
  \cY_0 \times_\Sigma \Bl_1\Sigma \to\cY_0$.
\item $\beta_2$ is the blowup of $\bP^1=\beta\inv(1)$, preimage of
  the central point $1\in(\cY_6\tor)_1$.
\item $\beta_3$ and $\beta_4$ are the blowups of the sections $s_i =
  R_i\cap G_i\cap B_i$, $i=1,2$.
\end{enumerate}
This sequence is followed by a contraction $\cY'\to\cY''$ followed by
a contraction $\cY''\to\cY$ covering the contraction $\Bl_1\Sigma\to\Sigma$.

\subsection{Kappa classes}
\label{sec:burniat-kappa}

\begin{theorem}
  In $A^*(\Sigma)$ one has $\kappa_0=1$,
  $\kappa_1 = \cO_\Sigma(1)=\cO(-K_\Sigma)$,
  $\kappa_2 = \tfrac{47}{4}\cdot[{\rm pt}]$.
\end{theorem}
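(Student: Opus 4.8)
The plan is to compute the pushforward $\kappa_i = f_*\big((K_{\cY/\Sigma}+\tfrac12\cD)^{i+2}\big)$ for the final family $f\colon (\cY,\tfrac12\cD)\to\Sigma$ directly, using the explicit description of $\cY$ as a sequence of smooth blowups \eqref{eq:blowups} of the toric family $\cY\tor\to\Sigma$ followed by two contractions. Since $\kappa_0=\nu$ is just the volume $(K_{Y_s}+\tfrac12 D_s)^2$ of a general fiber and $\kappa_1$ is the CM $\bQ$-line bundle on the two-dimensional base $\Sigma=\Bl_3\bP^2$, the first two assertions are short: $\kappa_0$ should follow from the numerical data of a general non-nodal degree-$4$ Burniat pair $(Y,\tfrac12 D)$, and $\kappa_1$ amounts to identifying a divisor class on $\Sigma$, which by nefness (from the semipositivity results quoted in the paper) together with a degree computation against the curve classes on $\Sigma$ should pin it down to $\cO(-K_\Sigma)$. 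The real content is $\kappa_2\in A^0(\Sigma)_\bQ=\bQ\cdot[\mathrm{pt}]$, a single rational number, namely $\tfrac14(K_{\cY/\Sigma}+\tfrac12\cD)^4$ computed on the $4$-fold $\cY$.

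To get that number I would work on $\cY' = \cY_4$, where $K_{\cY'/Z}+\tfrac12\cD'$ is only relatively nef (not yet canonical), so I must instead pull back the $\bQ$-Cartier class $K_{\cY/\Sigma}+\tfrac12\cD$ along $\cY'\to\cY''\to\cY$ and intersect there; equivalently, express $K_{\cY'/Z}+\tfrac12\cD'$ as the pullback of $K_{\cY/\Sigma}+\tfrac12\cD$ plus the exceptional discrepancy divisors of the two contractions, which contribute zero to the top self-intersection by the projection formula only after one checks the discrepancy corrections are orthogonal in the right way — so it is cleaner to compute $(K_{\cY/\Sigma}+\tfrac12\cD)^4$ by pulling everything back to $\cY\tor$. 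Concretely: (i) on the toric family $\cY\tor\to\Sigma$, write $K_{\cY\tor/\Sigma}+\tfrac12\cD\tor$ in terms of toric boundary divisors and the class of the interior divisor $\cD\inr$, which on the degree-$6$ model was identified in \cite{alexeev2009explicit-compactifications} as a section of $p_2^*\cO(2)\otimes p_1^*\cO(-F)$; (ii) track this class through each blowup $\beta_j$, adding the appropriate multiple of the exceptional divisor $E_j$ so that the strict/total transform of $K+\tfrac12\cD$ becomes nef on $\cY'$; (iii) use the standard blowup formulas $E_j^{\dim} = \pm(\text{something})$ and $E_j\cdot\beta_j^*(\cdot)=0$ to reduce $(K_{\cY'/Z}+\tfrac12\cD')^4$ to intersection numbers pulled back from $\cY\tor$ plus purely exceptional terms, then evaluate on the toric variety $\cY\tor$ where intersection numbers are combinatorial; (iv) correct for the base being $Z=\Bl_1\Sigma$ versus $\Sigma$ (the family descends over the type-$E$ divisor, so the pushforward to $\Sigma$ equals the pushforward to $Z$ of the same class), and divide by $4$ if one is computing $\kappa_2$ for the Burniat cover rather than for the pair — here the statement is for $\Sigma=\oM_4\ubur$ with the pair family, so I must be careful which of the $\tfrac12 D$-normalizations and which powers of $2$ from Section~\ref{sec:covers} are already absorbed.

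The main obstacle I anticipate is bookkeeping the exceptional divisors and discrepancies through the four blowups and the two contractions to the relative canonical model: each $\beta_j$ and each contraction shifts $K+\tfrac12\cD$ by a $\bQ$-divisor supported on exceptional loci, and getting the coefficients exactly right (they are the log discrepancies of the pairs over the various strata shown in Figure~\ref{fig-deg4}) is where an arithmetic slip would change $\tfrac{47}{4}$. A secondary difficulty is that $\beta_3,\beta_4$ blow up the sections $s_i=R_i\cap G_i\cap B_i$ which lie on the interior divisor, so the strict transform of $\cD\inr$ changes and the self-intersection picks up correction terms from how $\cD\inr$ meets the exceptional $\bP^1$-bundles; these must be read off from the local configuration near the triple points. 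A useful consistency check at the end: the answer must be a nef class times $[\mathrm{pt}]$, hence a nonnegative rational number, and restricting the whole computation to a general pencil (a curve) in $\Sigma$ should recover the degree-$3$ ($K^2=3$) statement's numerics or at least be compatible with the $\kappa_1$ computation via the relation $\kappa_2 \cdot [\mathrm{pt}]$ matching $(K_{\cY/\Sigma}+\tfrac12\cD)^2\cdot \kappa_1^{?}$-type identities on the surface $\Sigma$.
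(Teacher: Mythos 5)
Your plan is, in outline, the same as the paper's: compute everything on the explicit toric model $\cY\tor\to\Sigma$, track the relative log canonical class through the four blowups of \eqref{eq:blowups} with Fulton's blowup formula, use that the contraction to the relative canonical model is crepant and that the family descends over the type-E divisor so the pushforward from $Z=\Bl_1\Sigma$ computes the answer on $\Sigma$. So the strategy is right. But as written the proposal stops short of the two inputs that actually produce the numbers. First, you never identify the class $K_{\cY\tor/\Sigma}+\tfrac12\cD\tor$ itself: the paper shows, using the second projection $p_2\colon\cY\tor\to V$ and the reflexivity of the polytope $P_4$ (Lemma~\ref{lem:P4}), that $\cD\inr=2\Delta-\Delta\ver$ and hence $K_{\cY\tor/\Sigma}+\tfrac12\cD\tor=\tfrac12\Delta=p_2^*\cO_V(\tfrac12)$; the lattice data of $P_4$ ($18$ facets of lattice volume $7$, $\vol P_4=18\cdot 7$) is what gives $(K\tor+\tfrac12\cD\tor)^4=\tfrac{63}{8}$ and the degree $2\cdot\tfrac78$ on the divisor over a $(-1)$-curve. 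Without this, your step (iii) ``evaluate combinatorially'' has nothing to evaluate. Second, the blowup corrections at the sections $s_i=R_i\cap G_i\cap B_i$ require knowing their normal bundles; the paper proves they are trivial (Lemma~\ref{lem:sections}), which is exactly what turns $2\cdot\tfrac78$ into $2\cdot\tfrac{7-3}{8}=1$ and fixes the exceptional contributions in the $\kappa_2$ count. Your worry about discrepancies of the contractions is a non-issue: $\cY'\to\cY''$ is crepant and $\cY''\to\cY$ is a base change along $\Bl_1\Sigma\to\Sigma$, so there are no discrepancy terms to control.

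Two further points. For $\kappa_1$, nefness plus ``a degree computation against the curve classes'' is not a complete argument: $\Sigma$ has Picard rank $4$, so you would need degrees on a spanning set of curves; the paper instead invokes the $S_3\ltimes S_2^2$ symmetry to conclude $\kappa_1$ is a multiple of $\cO_\Sigma(1)=\cO(-K_\Sigma)$ and then computes a single degree on a boundary $(-1)$-curve. You should either add the symmetry argument or commit to computing four independent degrees. Finally, a normalization slip: the theorem is about the family of pairs over $\oM_4=\Sigma$, not about $\oM_4\ubur$ (which is the finite quotient), and no division by $4$ enters; the factor $\tfrac12$ is already inside $K+\tfrac12\cD$, and $\kappa_0=(K_Y+\tfrac12 D)^2=(-\tfrac12K_Y)^2=1$ is immediate for the degree-$4$ del Pezzo fiber rather than ``numerical data of the Burniat surface.''
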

\begin{proof}
  The class $\kappa_0$ is the degree of the divisor $K_Y+\frac12 D$
  on a general fiber, so
  \begin{math}
    \kappa_0 = (K_Y + \frac12 D)^2 = (-\frac12 K_Y)^2 = 1.
  \end{math}

  For the rest, we begin by computing the divisor
  $K_{\cY\tor/\Sigma} + \frac12 \cD\tor$. Let us denote the toric
  boundary of the toric variety $\cY\tor$ by $\Delta$. Denote the part
  of $\Delta$ that maps to the toric boundary of $\Sigma$ by
  $\Delta\ver$ and the remaining part by $\Delta\hor$. Obviously, one
  has $\Delta\ver = p_1^*(\Delta_\Sigma) = p_1^*\cO_\Sigma(1)$ for the
  projection $\cY\tor\to Z\tor =\Sigma$.

  As explained above, the family $\cY_6\tor\to\oM_6\tor$ comes with a
  second projection $p_2\colon\cY\tor_6\to V_{P_6}$ and the fibers
  $Y_6\tor$ are closed subvarieties sweeping out
  $V_{P_6}$. Restricting this family to $Z$ gives a family that sweeps
  out a smaller toric variety $V$ for the lattice polytope $P_4$
  obtained by an appropriate projection of $P_6$. By
  Lemma~\ref{lem:P4fam} the projection $\cY\tor_6\to V$ is small,
  since both varieties have $18$ toric boundary divisors. By
  Lemma~\ref{lem:P4}, $P_4$ is reflexive with a unique interior
  point. This implies that $\Delta = p_2^*\Delta_{V}$ and
  $-K_{V} = \Delta_{V} = \cO_{V}(1)$ . Restricting the divisor
  $\cD_{6,}{}\inr$ on $\oM_6\tor$ to the family over $Z\tor$ gives
  $\cO(\cD\inr) = p_2^* \cO_{V}(2) \otimes p_1^* \cO_\Sigma(-1)$.
  Putting this together, writing additively, and for convenience
  mixing up sheaves and divisors, we get:
  \begin{eqnarray*}
    &&K_{\cY\tor/\Sigma} = -\Delta\hor, \quad \Delta = p_2^*\cO_V(1),\quad
      \cD\bry = \Delta\hor,\\
    &&\cD\inr = p_2^*\cO_V(2) - p_1^*\cO_\Sigma(1) = 2\Delta - \Delta\ver,\\
    &&K_{\cY\tor/\Sigma} + \tfrac12 \cD\tor = 
       -\Delta\hor + \tfrac12(\Delta\hor + 2\Delta-\Delta\ver) =
       \tfrac12 \Delta  = p_2^*\cO_V(\tfrac12).
  \end{eqnarray*}
  By symmetry, $\kappa_1$ is a multiple of $\cO_\Sigma(1)$. To find
  this multiple it is
  enough to find its intersection with a boundary $(-1)$-curve $C$ on
  $\Sigma$, which equals $(K_\cY+\tfrac12 \cD)^3$ on the divisor
  $F = f\inv(C) \subset \cY$. To compute it, we can ignore the blowup
  $\rho_1$ since it does not touch $F$. We can also compute on the
  family $\cY'$ since the contraction $\cY'\to\cY''$ from a relative
  minimal model to a relative canonical model is crepant.

  The restriction of $\cY\tor$ to $F$ has two irreducible components
  corresponding to the two surfaces $\Bl_1\bF_1$ in the case D of
  Figure~\ref{fig-deg4}. Each component maps birationally to a
  boundary divisor of $V$.  Thus, the degree of $p_2^*\cO_V(1)$ on $F$
  is twice the degree of $\cO_V(1)$ on a boundary divisor of $V$. The
  latter degree is the lattice volume of the corresponding facet
  of $P_4$, which by Lemma~\ref{lem:P4} is $7$. So the degree of
  $K_{\cY\tor/\Sigma} + \frac12\cD$ on $F$ is $2\cdot \frac78$.

  Restricting $\cY\tor$ to $C$ gives a family $\cY\tor_C$ with two disjoint
  sections corresponding to the two special points. The family
  $\cY'_C$ is obtained from it by
  blowups at the two special sections, one in each irreducible
  component of $\cY\tor_C$. One has
  \begin{displaymath}
    K_{\cY'_C/C} + \tfrac12\cD|_F =
    \beta^*\big( K_{\cY_C \tor/ C} + \tfrac12\cD\tor_C \big) - \tfrac12 E_1
    -\tfrac12 E_2, 
  \end{displaymath}
  where $E_1$, $E_2$ are the exceptional divisors.  Using the blowup
  formula \cite[3.3.4]{fulton1984intersection-theory} and
  Lemma~\ref{lem:sections}, we get that the degree of
  $K_{\cY/\Sigma} + \frac12\cD$ on $F$ is
  $2\cdot \frac{7-3}{8} = 1$.  So,
  $\kappa_1 = \cO_\Sigma(1)$.

  To compute $\kappa_2 = (K_{\cY'/\Bl_1\Sigma} + \frac12\cD')^4$, we
  can compute on $\cY'\to\Bl_1\Sigma$ using the functorial
  property of the diagram~\eqref{eq:com-square}.  On $\cY\tor$ one has
  \begin{math}%\displaystyle
    \left( K_{\cY\tor/\Sigma} + \frac12\cD\tor \right)^4 =
    \cO_V\left(\frac12\right)^4 =
    \frac{18\cdot 7}{2^4} = \frac{63}{8}
  \end{math}
  by Lemma~\ref{lem:P4}. Then we trace how this number changes under
  the four blowups in \eqref{eq:blowups} using %the blowup formula
  \cite[3.3.4]{fulton1984intersection-theory}.  
\end{proof}

The next two lemmas are proved by direct computations with polytopes.

\begin{lemma}\label{lem:P4}
  The polytope $P_4$ is a reflexive $4$-dimensional polytope with the
  $f$-vector $(1, 30, 84, 72, 18, 1)$ and a unique interior point. Its
  $18$ facets are isomorphic $3$-dimensional polytopes with $8$
  vertices and lattice volume $7$. One has $\vol(P_4) = 18\cdot 7$. 
\end{lemma}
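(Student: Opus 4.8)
The plan is to do the computation directly with the lattice polytope $P_4$, using reflexivity to reduce the volume assertion to a count over the facets and using the symmetry of the Burniat configuration to cut down the number of facets one inspects by hand.

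First I would write $P_4$ in coordinates. Recall from \cite[Sec.~4]{alexeev2009explicit-compactifications} that $P_6$ is the convex hull of the explicit $46$-point set $A_6$, living in a lattice $M_6$, and that the second projection $p_2\colon\cY_6\tor\to V_{P_6}$, restricted to the subfamily over $Z\tor\simeq\Sigma$, is induced by an integral-linear projection $\pi\colon M_6\to M_4$ onto a rank-$4$ quotient. Thus $P_4=\pi(P_6)$ is the convex hull of the $46$ image points, and from this list I would compute the vertices (there will be $30$ of them, the remaining images being non-extremal) and the list of supporting hyperplanes, i.e.\ the facet normals.

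From the facet description the rest is a finite check. For the dimension and reflexivity: one reads off a single interior lattice point $v_0$; after translating $v_0$ to the origin, each facet normal is primitive in the dual lattice and each facet lies on a hyperplane $\{\langle\,\cdot\,,u_F\rangle=1\}$, i.e.\ at lattice distance $1$ from $v_0$. This is exactly reflexivity of $P_4$, and it forces the interior lattice point to be unique. For the $f$-vector one counts $f_0=30$ vertices, $f_1=84$ edges, $f_2=72$ two-faces, $f_3=18$ facets, with the Euler relation $f_0-f_1+f_2-f_3=30-84+72-18=0$ as a sanity check. For the facets: the symmetry group permuting the labels $R_i,G_i,B_i$ acts on $P_4$ with only a few facet orbits, so it suffices to take one representative per orbit, check it is a $3$-polytope with $8$ vertices (Euler: $f_0-f_1+f_2=2$) and normalized lattice volume $7$, and check that distinct orbit representatives are lattice-isomorphic to one another; the lemma's claim that all $18$ facets are isomorphic then follows. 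Finally, since $P_4$ is reflexive, decomposing it into the $18$ pyramids over its facets with common apex $v_0$, and using that each facet sits at lattice distance $1$ (so the pyramid over $F$ has normalized volume $\vol(F)$), gives $\vol(P_4)=\sum_F\vol(F)=18\cdot7=126$, equivalently $\cO_{V}(1)^4=126$; here $\vol$ denotes normalized lattice volume, taken in the appropriate dimension.

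The one genuine obstacle is bookkeeping: faithfully extracting the projection $\pi$ and the $46$ points of $A_6$ from \cite{alexeev2009explicit-compactifications}, and then running the entirely mechanical (e.g.\ \texttt{polymake}-assisted) enumeration of faces and volumes without slips. Reflexivity and the symmetry group are precisely what keep the computation finite and hand-checkable.
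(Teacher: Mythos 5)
Your proposal is correct and is essentially the paper's own approach: the paper simply states that this lemma is ``proved by direct computations with polytopes,'' and your outline (projecting the $46$-point set $A_6$ to get $P_4$, verifying reflexivity and the face data by a finite, software-assisted enumeration, and obtaining $\vol(P_4)=18\cdot 7$ from the pyramid decomposition over the unique interior point) is a faithful, correctly reasoned version of that computation. The reflexivity-based volume argument and the use of the label symmetry to cut down facet checks are sound and add useful detail the paper leaves implicit.
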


\begin{lemma}\label{lem:P4fam}
  The toric family $\cY\tor$ is a projective toric variety for a
  $4$-dimensional lattice polytope $P_4+14 P_\Sigma$ with the
  $f$-vector $(1, 42, 96, 72, 18, 1)$.
\end{lemma}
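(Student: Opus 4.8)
The plan is to realize $\cY\tor$ as the projective toric variety of an explicit $4$-dimensional lattice polytope and then to read off its face numbers by a direct calculation. Besides the fibration $p_1\colon\cY\tor\to\Sigma$, the toric variety $\cY\tor$ carries the second projection $p_2\colon\cY\tor\to V=V_{P_4}$, and one checks from the construction of \cite{alexeev2009explicit-compactifications,alexeev2023explicit-compactifications} that the fan of $\cY\tor$ is the common refinement of the fan of $V$ (the normal fan of $P_4$, cf.\ Lemma~\ref{lem:P4}) with the fan pulled back along $p_1$ from the fan of $\Sigma$. In particular $\cY\tor$ has the same rays as $V$, hence the same $18$ prime toric divisors, so $p_2$ is small; and combining the relations $K_{\cY\tor/\Sigma}=-\Delta\hor$, $\Delta\ver=p_1^*\cO_\Sigma(1)$, $\Delta=\Delta\hor+\Delta\ver$ from the proof of the theorem, the divisor $-K_{\cY\tor}=\Delta=p_2^*\cO_V(1)$ is nef and Cartier with polytope $P_4$ (here $V$ is Gorenstein because $P_4$ is reflexive).

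First I would fix explicit coordinates: the vertices of $P_4$ in $M\cong\bZ^4$ (obtained, as in Lemma~\ref{lem:P4}, by an appropriate projection of the $46$-point polytope $P_6$ of \cite{alexeev2009explicit-compactifications}), the six vertices of the anticanonical reflexive hexagon $P_\Sigma$ of $\Sigma=\Bl_3\bP^2$ in $M_\Sigma\cong\bZ^2$, and the lattice inclusion $M_\Sigma\hookrightarrow M$ dual to the surjection of fans defining $p_1$, under which $P_\Sigma$ becomes a two-dimensional polytope in $M_\bR\cong\bR^4$. Then I would consider the nef Cartier divisor $A:=-K_{\cY\tor}+14\,p_1^*\cO_\Sigma(1)=\Delta+14\,\Delta\ver$, for the coefficient $14$ furnished by the construction; its polytope is the Minkowski sum $P_4+14\,P_\Sigma$ (equivalently, its support function equals $h_{P_4}+14\,h_{P_\Sigma}$). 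I claim $A$ is ample: across every wall of the fan of $\cY\tor$ at least one of $h_{P_4}$, $h_{P_\Sigma}$ is strictly convex — $h_{P_4}$ across the walls inherited from the fan of $V$, and $h_{P_\Sigma}$ across those walls lying over a wall of the fan of $\Sigma$ — and, both summands being globally convex, their positive combination $h_{P_4}+14\,h_{P_\Sigma}$ is strictly convex on the whole fan. Consequently the normal fan of $P_4+14\,P_\Sigma$ is the fan of $\cY\tor$, i.e.\ $\cY\tor$ is the projective toric variety of $P_4+14\,P_\Sigma$; as a check one can match this fan against the one produced by the toric part of the blowup sequence~\eqref{eq:blowups}.

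Finally, with the vertices of $P_4$ and of $14\,P_\Sigma$ in hand I would compute the Minkowski sum and its face lattice — a routine, computer-verifiable convex-hull computation — verify that it is $4$-dimensional, and read off the $f$-vector $(1,42,96,72,18,1)$, checking the Euler relation $42-96+72-18=0$. Comparison with the $f$-vector $(1,30,84,72,18,1)$ of $P_4$ shows that adjoining $14\,P_\Sigma$ creates $12=2\cdot 6$ new vertices and $12$ new edges while leaving the $72$ two-faces and $18$ facets unchanged, a useful consistency check (and the $18$ facets reconfirm the smallness of $p_2$ invoked in the proof of the theorem). The step I expect to be the main obstacle is the bookkeeping needed to pin down the fan of $\cY\tor$ exactly — matching the four successive smooth blowups and the fibration over $\Sigma$ against the common refinement of the two normal fans, and confirming that $14$ is indeed the correct coefficient; once the coordinates are settled, the face count itself is entirely mechanical.
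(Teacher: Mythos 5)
Your proposal is correct and follows essentially the same route as the paper, which simply records that the lemma is ``proved by direct computations with polytopes'': the toric family comes from the construction in \cite{alexeev2009explicit-compactifications} with $14P_\Sigma$ the fiber polytope, so the content is exactly the identification of the polytope $P_4+14P_\Sigma$ (equivalently, of the normal fan as the common refinement, which your strict-convexity argument justifies correctly) followed by a mechanical face-lattice computation. One small caution: being the common refinement does not by itself give ``the same rays as $V$'' --- that the ray count stays at $18$ is an output of the computation (the $18$ facets), as you in fact acknowledge at the end, and it is this computed fact, not the refinement structure, that yields the smallness of $p_2$ used in the theorem.
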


Here, $P_\Sigma$ is the hexagon corresponding to the toric variety
$(\Sigma, \cO_\Sigma(1))$, and $14 P_\Sigma$ is the fiber polytope
coming from the construction of the toric family in
\cite{alexeev2009explicit-compactifications}.

\begin{lemma}\label{lem:sections}
  For each of the special sections $s_1=R\tor_1\cap G\tor_1\cap B\tor_1$
  and $s_2=R\tor_2\cap G\tor_2\cap B\tor_2$ of $\cY\tor\to\Sigma$, the
  normal bundle is trivial, i.e. equal $\cO_\Sigma^{\oplus2}$. 
\end{lemma}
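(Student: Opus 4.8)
The plan is to prove Lemma~\ref{lem:sections} by tracking the special sections $s_i$ through the toric construction of $\cY\tor\to\Sigma$ and identifying their normal bundles with the restriction of an explicit toric divisor. Recall that $s_i = R\tor_i\cap G\tor_i\cap B\tor_i$ is the intersection of three of the twelve toric branch curves, so it is a toric subvariety of the fiberwise-toric family $\cY\tor$. In each fiber $Y\tor\simeq\Sigma$ this triple intersection is a single point (the point being blown up in the degree-$4$ construction), so $s_i\to\Sigma$ is a section. The first step is to locate $s_i$ combinatorially: using the description of $\cY\tor$ via the fiber polytope $P_\Sigma$ and the polytope $P_6$ (or $P_4$), the curves $R\tor_i, G\tor_i, B\tor_i$ correspond to specific facets, and $s_i$ corresponds to a common codimension-$3$ face, which is a $1$-dimensional edge mapping isomorphically to the base $\Sigma$.

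The key computational step is then to compute the normal bundle of this toric curve inside the toric $4$-fold $\cY\tor$. For a toric subvariety the normal bundle splits as a sum of line bundles indexed by the rays of the normal fan (equivalently, the facets of the polytope containing the given face), and each summand is $\cO(D_\rho)|_{s_i}$ for the corresponding toric boundary divisor $D_\rho$. So I would enumerate the facets of $P_4 + 14P_\Sigma$ (using the $f$-vector from Lemma~\ref{lem:P4fam}, or more directly the local cone structure) that contain the edge corresponding to $s_i$, and for each compute the degree of the restriction of the associated divisor to the curve $s_i\simeq\Sigma\mathbin{\simeq}\bP^2$-blown-up — but since $s_i$ maps isomorphically to the \emph{base} $\Sigma$, what I really need is the degree as a divisor on $\Sigma$. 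The claim $N_{s_i/\cY\tor}\simeq\cO_\Sigma^{\oplus 2}$ then amounts to showing that the two relevant divisor restrictions each have degree $0$ on every curve class in $\Sigma$, i.e.\ are trivial; equivalently, that the edge corresponding to $s_i$ is contained in exactly two facets whose primitive normal vectors, after projecting out the base directions, are trivial — reflecting that $s_i$ sits inside the relative interior of a fiber, away from the toric boundary of $\cY\tor$ in the horizontal directions, so the only vanishing divisors near $s_i$ are the vertical ones pulled back from $\Sigma$, and those pull back to $\cO_\Sigma$ along a section.

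Concretely, I would argue as follows. The $\bZ_2^2$-cover structure means each of $R\tor_i, G\tor_i, B\tor_i$ is a component of the interior divisor $\cD\inr$ rather than of the toric boundary $\Delta\hor = \cD\bry$; moreover $s_i$ does not meet $\Delta\ver = p_1^*\Delta_\Sigma$ except over the toric boundary of $\Sigma$, where it restricts to the corresponding toric boundary point of the section. Thus in a neighborhood of $s_i$ the toric variety $\cY\tor$ looks like $\Sigma\times \mathbb{A}^2$ étale-locally, with $s_i = \Sigma\times\{0\}$, giving directly $N_{s_i/\cY\tor}\simeq\cO_\Sigma^{\oplus 2}$. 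I would make this precise by exhibiting the two-dimensional subfan of the fan of $\cY\tor$ along the ray (edge) of $s_i$ and checking it is the product fan $\Sigma\text{-fan}\times(\text{fan of }\mathbb{A}^2)$ locally — this is a finite check using the explicit polytope data of \cite{alexeev2009explicit-compactifications}.

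The main obstacle will be pinning down the precise combinatorics: verifying that $s_i$ is a \emph{toric} curve in $\cY\tor$ (not just a section that happens to be a triple intersection fiberwise) and that the local fan structure along it is genuinely a product, with no twisting coming from the nontrivial $p_1^*\cO_\Sigma(-1)$ twist in $\cD\inr$. The twist $p_2^*\cO_V(2)\otimes p_1^*\cO_\Sigma(-1)$ means the interior curves individually carry base-direction contributions, so I must check that these contributions cancel appropriately along the common intersection $s_i$ — i.e.\ that the three curves $R\tor_i, G\tor_i, B\tor_i$ are mutually transverse along $s_i$ with normal directions spanning a rank-$2$ \emph{trivial} subbundle, the $\cO_\Sigma(-1)$ twists being absorbed into the base direction and not the normal direction. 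Once the product-fan structure is confirmed, the conclusion is immediate.
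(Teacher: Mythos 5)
Your plan has a problem at its very starting point: the sections $s_i$ are \emph{not} torus-invariant subvarieties of $\cY\tor$, so they do not correspond to any face of the polytope $P_4+14P_\Sigma$, and the proposed computation of the normal bundle as a sum of restrictions of toric boundary divisors indexed by the facets containing ``the face of $s_i$'' has nothing to apply to. As you yourself note, $R\tor_i$, $G\tor_i$, $B\tor_i$ are components of the interior divisor $\cD\inr$, not of the toric boundary; in a general fiber their common point is a point of the open torus orbit of that fiber, so $s_i$ meets the open orbit of $\cY\tor$ and cannot be a union of orbit closures. You flag ``verifying that $s_i$ is a toric curve'' as the main obstacle, but that verification fails, so the approach collapses rather than merely needing more detail. (There is also a dimension slip: $s_i\simeq\Sigma$ is a surface, a section of a family of surfaces, not a curve corresponding to a one-dimensional edge or codimension-three face.) Your fallback claim that near $s_i$ the family looks \'etale-locally like a product with an affine plane is either vacuous or unproved: any section of a smooth morphism is locally a product, which says nothing about \emph{global} triviality of $N_{s_i/\cY\tor}$, while the global product-fan assertion again presupposes that $s_i$ is toric.

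The paper's actual argument is different and does not need $s_i$ to be toric: let $U$ be the union of torus orbits of $\cY\tor$ meeting $s_1$; the vertical torus $\bC^*{}^2=\ker(\bC^*{}^4\to\bC^*{}^2)$ acts freely on $U$, and the section $s_1$ trivializes it, so $U\simeq\Sigma\times\bC^*{}^2$ globally with $s_1$ the unit section, whence $N_{s_1/\cY\tor}=N_{s_1/U}=\cO_\Sigma^{\oplus2}$. The free vertical-torus action together with the section is exactly what globalizes the product structure, and this is the idea missing from your proposal. A repair along your divisorial lines would require computing the restrictions $\cO(R\tor_1)|_{s_1}$, $\cO(G\tor_1)|_{s_1}$ individually, which the available relation $\cO(\cD\inr)=p_2^*\cO_V(2)\otimes p_1^*\cO_\Sigma(-1)$ does not provide, since it only controls the sum of all six interior components.
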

\begin{proof}
  Consider the union $U$ of torus orbits in $\cY\tor$ containing
  $s_1$. Since $U$ comes with a section and a free action of the
  vertical torus $\bC^*{}^2 = \ker (\bC^*{}^4 \to \bC^*{}^2)$, one has
  $U = \Sigma\times \bC^*{}^2$. So, $N_{s_1/U} = \cO_{s_1}^{\oplus2}$,
  and the same works for $s_2$.
\end{proof}

\section{Campedelli surfaces}
\label{sec:campedelli}

Campedelli surfaces considered in
\cite{alexeev2009explicit-compactifications} are surfaces of general
type with $K^2=2$ and $p_g=0$ which are $\bZ_2^3$-covers of $\bP^2$
ramified in $7$ lines in general position. The branch data for this
cover consists of these seven lines $D_g$, $g\in\bZ_2^3\setminus 0$.
The moduli space has dimension~$6$, which coincides with the dimension
of the virtual fundamental class, equal $10\chi-2K_X^2$. So the
virtual fundamental class in this case is $[\ocM\cam]$.

By \cite{alexeev2009explicit-compactifications} the moduli stack
$\ocM\cam$ is a global quotient $[\oM:\Gamma]$, where $\oM$ is the
compactified moduli space of labeled log canonical pairs
$(\bP^2,\sum_{g\in\bF_2^3\setminus 0} \frac12 D_g)$ and
$\Gamma=\GL(3,\bF_2^3)$.  Further, $\oM$ is this case is the GIT
quotient $(\bP^2)^7//\SL(3)$ for the symmetric polarization
$(1,\dotsc,1)$.  The sets of stable and semistable points in this case
coincide and the $\SL(3)$-action on it is free. Therefore,
$(\bP^2)^7//\SL(3)$ is smooth.

The universal family $(\cY, \sum\frac12 D_g)\to\oM$ is itself a GIT quotient
of a family of hyperplane arrangements in
$(\bP^2)^7\times (\bP^2)^\vee$ by the action of the group $\SL(3)$ for
the polarization $(1,\dotsc, 1, \epsilon)$, $0<\epsilon\ll1$.

\smallskip

As a first step, I compute the Chow ring of
$\oM = (\bP^2)^7//\SL(3)$. Then the rational Chow ring of the stack
$\ocM\cam$ is identified with its $\Gamma$-invariant subring.

\subsection{Chow ring}
\label{sec:cohring}

Let $X=(\bP^2)^7$ with the diagonal action of $G=\SL(3)$ Consider
the GIT quotient $X//G$ for the symmetric polarization
$(1,\dotsc,1)$. In this case the stable and semistable loci coincide
and the $G$-action on $X\us$ is free, so the cohomology ring of $X//G$
can be identified with the equivariant cohomology ring
$H_G(X\uss,\bQ)$ and the Chow ring of $X//G$ with the equivariant Chow ring
$A_G(X\uss)_\bQ$.

\begin{remark}
  As Michel Brion explained to me, for any semisimple group $G$
  with a maximal torus $T$ and a $G$-variety $V$ that admits a
  $T$-invariant cell decomposition, the $G$-equivariant Chow ring
  $A^*_G(V)_\bQ$ and the $G$-equivariant cohomology ring
  $H^*_G(V,\bQ)$ coincide. Indeed, for the $T$-equivariant versions the
  cycle map $A^*_T(V)_\bQ \to H^*_T(V,\bQ)$ is an isomorphism by
  \cite{brion1997equivariant-chow}, and the $G$-equivariant versions
  $A^*_G$ and $H^*_G$ are the Weyl group invariant subrings of these.
\end{remark}

\begin{theorem}\label{thm:coh-ring}
  One has
  \begin{displaymath}
    A^*(X//G)_\bQ= H^*(X//G,\bQ) = 
    \bQ[z_1,\dotsc,z_7, c_2, c_3] / J
  \end{displaymath}
  with the generators of degree $(1,\dotsc,1,2,3)$ and the ideal $J$
  generated by the relations
  \begin{enumerate}
  \item $z_1^3 + c_2 z_1 + c_3$
  \item $\sigma_3(z_1,z_2,z_3,z_4,z_5) -
    \sigma_1(z_1,z_2,z_3,z_4,z_5)c_2 + c_3$
  \item $\sigma_4(z_1,z_2,z_3,z_4,z_5) -
    \sigma_2(z_1,z_2,z_3,z_4,z_5)c_2 + c_2^2 + \sigma_1(z_1,z_2,z_3,z_4,z_5)c_3$
  \item $(z_1^2z_2^2+z_2^2z_3^2+z_3^2z_1^2) +
    (z_1+z_2+z_3)(z_1z_2z_3-c_3) + (z_1^2 + z_2^2+ z_3^3)c_2 +
    c_2^2$
  \end{enumerate}
  and the ones obtained from them by permuting the variables
  $z_1,\dotsc, z_7$. Here, $\sigma_k$ are the elementary symmetric
  polynomials. 

  Moreover, the relations of types (1,2,4) and a
  single relation of type (3), or the sum of all relations of type (3),
  suffice. The relations of types (1,2) are independent.

  The dimensions of $A^{i}$ for $i=0,\dotsc,6$ are $(1,7,29,64,29,7,1)$.
\end{theorem}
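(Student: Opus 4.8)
The plan is to compute the equivariant Chow ring $A^*_G(X\uss)_\bQ$ directly, where $X=(\bP^2)^7$ and $G=\SL(3)$. Since $X\us = X\uss$ and $G$ acts freely, $A^*(X//G)_\bQ = A^*_G(X\uss)_\bQ$, so I will work equivariantly throughout. First I would set up the presentation of $A^*_G(X)_\bQ$ itself: on $\bP^2 = \bP(V)$ with $V$ the standard $3$-dimensional representation, the $G$-equivariant Chow ring of a single $\bP^2$ is $\bQ[z,c_2,c_3]/(z^3 + c_2 z + c_3)$, where $z$ is the equivariant hyperplane class, $c_2,c_3$ are the Chern classes of the tautological-type bundle (equivalently, $1, c_2, c_3$ generate $A^*_G(\mathrm{pt})_\bQ = A^*_{\SL(3)}(\mathrm{pt})_\bQ = \bQ[c_2,c_3]$). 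Taking the fiber product over $BG$, one gets $A^*_G(X)_\bQ = \bQ[z_1,\dots,z_7,c_2,c_3]/(\text{relation (1) for each }i)$ — this is a free module over $\bQ[c_2,c_3]$ of rank $3^7$. The task is then to describe the restriction map $A^*_G(X)_\bQ \to A^*_G(X\uss)_\bQ$, i.e.\ to identify the ideal of classes supported on the unstable locus $X\setminus X\uss$.

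The key step is an excision/localization argument. The unstable locus $X^{\rm us}$ for the symmetric polarization is a union of $G$-invariant closed subsets indexed by the ``destabilizing'' configurations: by the Hilbert–Mumford / Gelfand–MacPherson criterion, a $7$-tuple $(p_1,\dots,p_7)\in(\bP^2)^7$ is unstable iff some $4$ of the points coincide, or some $6$ of them lie on a line (the numerical condition being that a proper linear subspace contains ``too much weight''). I would enumerate the irreducible components $W_\alpha$ of $X^{\rm us}$ — the ``$4$ points equal'' strata (giving relations of type (2): $\sigma_3 - \sigma_1 c_2 + c_3$ on a diagonal $\bP^2$, with the ideal generated by such and by the higher symmetric relation (3) needed to cut out the full diagonal-type locus), and the ``$6$ points on a line'' strata (giving relation (4), the condition for three of the $z_i$ to be collinear, i.e.\ for the corresponding $3\times 3$ matrix of coordinates to be degenerate). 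For each $W_\alpha$ I would compute the class $[W_\alpha]$ (or rather the ideal of $A^*_G(X)$ generated by classes supported on it) by pushing forward from a resolution or from $W_\alpha$ itself using the equivariant structure, which is again a product/diagonal of projective spaces and hence has computable equivariant Chow ring. The excision sequence $A^*_G(X^{\rm us}) \to A^*_G(X) \to A^*_G(X\uss)\to 0$ then gives $J$ as the image of $\bigoplus A^*_G(W_\alpha)$, and unwinding which generators actually survive modulo relation (1) yields the four listed families of relations plus their $S_7$-orbits.

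For the ``moreover'' part about which relations suffice, I would argue by symmetry and by a dimension count: the $S_7$-orbit of relation (1) is forced (it is already in $A^*_G(X)$), the orbit of (2) generates the diagonal-type ideal, and within the degree-$3$ piece the relations of type (3) are not independent — their sum is a symmetric polynomial expressible via lower relations, so a single one (or their sum) plus the others suffices; this is checked by an explicit Gröbner-basis reduction. The independence of types (1) and (2) follows from noting that (1) involves only $z_i$ with a single index while (2) genuinely couples five indices, so neither lies in the ideal generated by the other's orbit together with the remaining relations. Finally, the Betti numbers $(1,7,29,64,29,7,1)$ I would obtain by computing the Hilbert series of the quotient ring $\bQ[z_1,\dots,z_7,c_2,c_3]/J$ — for instance via a Gröbner basis in a computer algebra system — and independently cross-check against Poincaré duality (the ring is that of a smooth projective $6$-fold, so the Betti numbers must be palindromic, as they are) and against known computations of the cohomology of $(\bP^2)^7//\SL(3)$, e.g.\ via the Kirwan surjectivity / Jeffrey–Kirwan residue formula or the GIT-wall description.

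I expect the main obstacle to be pinning down the ideal $J$ precisely — that is, showing the four listed families of relations (and their $S_7$-orbits) not only lie in the kernel of $A^*_G(X)\to A^*_G(X\uss)$ but \emph{generate} it. The ``lie in the kernel'' direction is geometric and relatively clean (each relation is a class visibly supported on an unstable stratum), but the ``generate'' direction requires either a careful analysis of the equivariant Chow groups of all the unstable strata and their intersections (an inclusion–exclusion over a poset of strata, with the combinatorics of $4$-fold coincidences and $6$-on-a-line configurations), or else a dimension argument: one computes the Hilbert series of the proposed quotient, computes $\dim_\bQ A^*(X//G)_\bQ$ independently (e.g.\ from the known Euler characteristic / Poincaré polynomial of this GIT quotient, which is a well-studied space), and checks the two agree — since $J$ is \emph{a priori} contained in the true relation ideal, equality of dimensions forces equality of ideals. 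I would take the latter route, making the honest computation of the Poincaré polynomial of $(\bP^2)^7//\SL(3)$ the crux, with the Gröbner-basis verification of the Hilbert series $(1,7,29,64,29,7,1)$ as the concrete check.
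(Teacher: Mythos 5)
Your overall strategy (work $G$-equivariantly on $X^{\rm ss}$, use the excision sequence $A^*_G(X^{\rm us})\to A^*_G(X)\to A^*_G(X\uss)\to 0$ to realize $J$ as the ideal of classes supported on the unstable locus, then pin it down by a Hilbert-series/dimension count) is sound in principle, and it is essentially a hands-on version of what the paper gets by quoting Brion's theorem on the equivariant cohomology of semistable points: there the ideal is produced directly from the \emph{maximal unstable sets} for the maximal torus, antisymmetrized over the Weyl group via the operator $p=D^{-1}\sum_w(-1)^{\sign(w)}w$ against a basis of the harmonic module, with the final bookkeeping done in sagemath. So the difference is mainly that you would be re-deriving Brion's description of the kernel stratum by stratum.

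However, your proposal contains a concrete error that would derail the computation: the instability criterion. For $7$ points in $\bP^2$ with the symmetric polarization, the Hilbert--Mumford inequality $\#\{p_i\in\bP(W)\}\le 7\dim W/3$ gives unstable exactly when \emph{three} points coincide or \emph{five} points lie on a line (equivalently, in the dual picture used in the paper, three lines coincide or five lines pass through a point) --- not ``four coincide or six on a line'' as you state. This is not a cosmetic slip: it changes the list of unstable strata, their codimensions, and hence the degrees of the generators of $J$, and it also breaks your matching of strata to relations. Relation (2), which involves $\sigma_k(z_1,\dotsc,z_5)$ and has degree $3$, is exactly the class of the codimension-$3$ locus where five of the points are collinear (with relation (3) a further generator of the pushforward ideal from that stratum, e.g.\ the image of $h\cdot[\text{incidence variety}]$ under the resolution by $\{(\ell,p_1,\dotsc,p_5):p_i\in\ell\}$); relation (4), of degree $4$, is the class of the codimension-$4$ small diagonal where three points coincide. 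Your assignment (type (2) from ``4 points equal'', type (4) from ``6 on a line'') is incompatible with these degrees: the $4$-fold diagonal has codimension $6$ and the $6$-on-a-line locus codimension $4$. The same error would propagate into your fallback dimension count, since the Kirwan/Morse-stratification computation of the Poincar\'e polynomial of $X//G$ also requires the correct list of unstable strata; with the wrong strata neither the containment $J\subseteq\ker$ nor the Hilbert-series match $(1,7,29,64,29,7,1)$ would come out. With the instability conditions corrected, your excision-plus-Hilbert-series plan can be carried through, but at that point it reproduces the content of Brion's theorem that the paper invokes directly.
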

\begin{proof}
  This is a direct application of Brion's paper
  \cite{brion1991cohomologie-equivariante}.  Let $T\subset G$ be the
  maximal torus. One has
  \begin{displaymath}
    A_T(\cdot) = S=\bQ[\eps_0,\eps_1,\eps_2]/(\eps_0+\eps_1+\eps_2), \quad
    A_G(\cdot) = S^W = \bQ[c_2,c_3], \quad W=S_3,
  \end{displaymath}
  where $c_2$ and $c_3$ are the Chern characters of the representation
  $V$ with $\bP^2=\bP(V)$, the elementary symmetric polynomials in
  $\eps_i$. Then the equivariant cohomology ring
  $A^*_T (X)$ equals $S[z_1,\dotsc,z_7]$ modulo the basic
  relations
  $(z_i+\epsilon_0)(z_i+\epsilon_1)(z_i+\epsilon_2)$. Similarly,
  $A^*_G (X)$ equals $S^W[z_1,\dotsc,z_7]$ modulo the basic
  relations $z_i^3 + c_2z_i+c_3$. Here, $z_i=c_1(\cO_{\bP^2}(1)$ for
  the $i$-th $\bP^2$,  cf. the remark below.

  Let $X\uss_T$ denote the set of semistable point for the action of
  $T$. Then by \cite[Thm.~2.1]{brion1991cohomologie-equivariante},
  $A^*_T(X\uss_T) = A^*_T(X) / I$, and the ideal $I$ is described with
  the help of the maximal unstable sets.  Up to the permutation by
  $S_3\times S_7$ of indices, they are
  \begin{displaymath}
    \{0\}^3,\  \{0,1,2\}^4 \quad\text{and}\quad \{1,2\}^5,\ \{0,1,2\}^2 
  \end{displaymath}
  which in a transparent way correspond to the instability conditions
  of $7$ lines on $\bP^2$ (see
  \cite{alexeev2009explicit-compactifications}): when three lines
  coincide or five lines pass through a common point.
  Then $I$ up to permutation by $S_3\times S_7$ is generated by the
  expressions 
  \begin{eqnarray*}
    &(z_1+\epsilon_1)(z_1+\epsilon_2)
    (z_2+\epsilon_1)(z_2+\epsilon_2)
    (z_3+\epsilon_1)(z_3+\epsilon_2)\\
    &\text{and} \quad
    (z_1+\epsilon_0)(z_2+\epsilon_0)(z_3+\epsilon_0)
      (z_4+\epsilon_0)(z_5+\epsilon_0)
  \end{eqnarray*}
  Then by \cite[Section~1.2]{brion1991cohomologie-equivariante} one
  has $A^*_G(X\uss) = A^*G(X) / p(I)$, where $p$ is the
  anti-symmetrization operator
  \begin{displaymath}
    p  = D\inv \sum_{w\in W} (-1)^{\sign(w)} w, \quad
    D =
  (\epsilon_0-\epsilon_1)(\epsilon_1-\epsilon_2)(\epsilon_2-\epsilon_0).
  \end{displaymath}
  Moreover, given generators $f_i$ of $I$ and an additive basis
  $\la g_j\ra$ for the harmonic module $\cH$, the ideal $p(I)$ is
  generated by $p(f_ig_j)$. For $G=\SL(3)$ the harmonic module is
  $\cH=\la 1, \epsilon_i, \epsilon_i^2-\epsilon_j^2, D\ra$.  The rest
  is a computation in sagemath \cite{sagemath}.
\end{proof}

\begin{remark}
  \cite{brion1991cohomologie-equivariante} follows the Grothendieck's
  convention for a projective space $\bP V$ as the space of
  $1$-dimensional quotient of $V$. We follow the convention that
  $\bP V$ is the space of lines in $V$, more common in the literature
  on equivariant cohomology. Then for us $z_i=c_1(\cO_{\bP^2}(1))$ and in
  \cite{brion1991cohomologie-equivariante} $z_i=c_1(\cO_{\bP^2}(-1))$.
\end{remark}

\subsection{$\GL(3,2)$-invariants}
\label{sec:gl32-invariants}

Denote by $s_k=\sigma_k(z_1,\dotsc,z_7)$ the elementary symmetric
polynomials in $z_1,\dotsc, z_7$. Also, denote by $s'_3$,
resp. $s''_3$, the sums of $z_iz_jz_k$ with distinct $i,j,k$ such that
the indices $i,j,k$ considered as points of the Fano plane
$\bP^2(\bF_2)$  are incident, resp. are not incident.
Obviously, $s'_3$ and $s''_3$ are $\GL(3,\bF_2)$-invariant but not
$S_7$-invariant. One has $s_3 = s'_3 + s''_3$ and denote $t=s''_3-4s'_3$.

\begin{theorem}\label{thm:inv-coh-ring}
  For the ring of invariants $A^*(X//G)^{\GL(3,2)}$,
  the dimensions of the invariants of degree $0,\dotsc,6$ are
  $(1,1,3,4,3,1,1)$ with an additive basis
  \begin{displaymath}
  1\quad s_1\quad s_1^2,c_2,s_2\quad s_1^3,c_2s_1,s_2s_1,t\quad
  c_2^2,c_2s_2,s_2^2\quad c_2^2s_1\quad c_2^3.     
  \end{displaymath}
  The algebra $A^*(X//G)^{\GL(3,2)}$ is generated by $s_1$,
  $c_2$, $s_2$, $t$ with relations
  \begin{eqnarray*}
    &&ts_1,\quad tc_2,\quad ts_2,\quad t^2 + 126 c_2^3,\quad \\
    &&45 s_1^4 - 1246 c_2^2 + 1090 c_2 s_2 - 240 s_2^2,\quad 
    15 c_2 s_1^2 - 28 c_2^2 + 35 c_2 s_2 - 10 s_2^2,\quad \\
    &&3 s_2 s_1^2 - 49 c_2^2 + 46 c_2 s_2 - 11 s_2^2,\quad 
    5 c_2 s_2 s_1 - 16 c_2^2 s_1,\quad 
    5 s_2^2 s_1 - 59 c_2^2 s_1.    
  \end{eqnarray*}
\end{theorem}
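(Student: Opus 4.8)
The plan is to descend from Theorem~\ref{thm:coh-ring} by taking $\GL(3,\bF_2)$-invariants of the presentation $\bQ[z_1,\dots,z_7,c_2,c_3]/J$. Since $\GL(3,\bF_2)\subset S_7$ acts trivially on $c_2,c_3$ and permutes the $z_i$ through its embedding into $S_7$ as the automorphism group of the Fano plane, the invariant subring is a module over $\bQ[c_2,c_3]$ generated by $\GL(3,\bF_2)$-invariant polynomials in the $z_i$. The first step is to identify the ring of $\GL(3,\bF_2)$-invariants inside $\bQ[z_1,\dots,z_7]$ in low degrees: the degree-$1$ and degree-$2$ invariants are just the symmetric ones $s_1$ and $s_1^2,s_2$ (no extra invariants appear, since $\GL(3,\bF_2)$ is transitive on points and on pairs of points of $\bP^2(\bF_2)$), while in degree $3$ a new invariant $t=s''_3-4s'_3$ appears because $\GL(3,\bF_2)$ has two orbits on unordered triples — collinear and non-collinear — whereas $S_7$ has one. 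I would check (by a short orbit count, or a character computation for the permutation representation of $\GL(3,\bF_2)$ on $\bQ[z]_{\le 6}$) that $s_1,c_2,s_2,t$ together with $c_3$ generate the invariant subring in the relevant range of degrees, and that $c_3$ can be eliminated modulo $J$ using relation~(1), $c_3=-z_1^3-c_2z_1$, symmetrized.

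Next, I would compute the Hilbert function of $A^*(X//G)^{\GL(3,2)}$. One route is averaging: $\dim A^i(X//G)^{\GL(3,2)} = \tfrac{1}{|\GL(3,2)|}\sum_{\gamma}\operatorname{tr}(\gamma\mid A^i(X//G))$, using the known total dimensions $(1,7,29,64,29,7,1)$ from Theorem~\ref{thm:coh-ring} together with the $S_7$-, hence $\GL(3,2)$-, module structure on the graded pieces; since everything here is a quotient of an explicit polynomial ring, this is again a finite sagemath computation, cf.\ the proof of Theorem~\ref{thm:coh-ring}. This gives the claimed dimensions $(1,1,3,4,3,1,1)$ and pins down the additive basis listed in the statement — in particular it certifies that $t$ is the unique (up to scalar and lower-order symmetric terms) new generator in degree $3$, that $t$ is annihilated by $s_1,c_2,s_2$ (because $ts_1,tc_2,ts_2$ would live in degrees $4,5,6$ where the dimension count leaves no room for them once the symmetric monomials are accounted for), and that $t^2\in A^6$ is a multiple of $c_2^3$.

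Finally, I would produce the explicit relations. The relations $ts_1=tc_2=ts_2=0$ and $t^2=-126c_2^3$ follow once one knows they vanish, by the dimension count above, but to fix the constant $-126$ and the coefficients in the remaining five relations I would work directly in $\bQ[z_1,\dots,z_7,c_2,c_3]/J$: symmetrize each of the four Fano-type generators of $J$ over $\GL(3,2)/(\text{stabilizer})$, reduce the products $s_1^4$, $c_2s_1^2$, $s_2s_1^2$, $c_2s_2s_1$, $s_2^2s_1$ modulo $J$, and express the results in the monomial basis of the appropriate graded piece; the syzygies among those expressions are precisely the listed relations. I would also verify the product $t^2$ by expanding $(s''_3-4s'_3)^2$ as a symmetric-plus-Fano combination and reducing modulo $J$. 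The main obstacle is purely computational bookkeeping: the ideal $J$ is large (the four relation types together with all their $S_7$-translates), and reducing a degree-$6$ polynomial in seven variables modulo it to extract exact rational coefficients like $-\tfrac{1246}{45}$-type data requires a careful Gröbner-basis computation rather than hand manipulation — so, as with Theorems~\ref{thm:coh-ring} and~\ref{lem:P4}, the substantive content is organizing the computation correctly and checking that the generating set $\{s_1,c_2,s_2,t\}$ together with the nine listed relations indeed presents the invariant ring (which the matching Hilbert functions confirm).
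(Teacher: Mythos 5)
Your overall strategy is the same as the paper's: start from the presentation of $A^*(X//G)$ in Theorem~\ref{thm:coh-ring}, use the representation theory of $\GL(3,\bF_2)\subset S_7$ on the graded pieces to get the invariant dimensions $(1,1,3,4,3,1,1)$ and the additive basis (the paper decomposes $A^1,A^2,A^3$ into $\GL(3,2)$-irreducibles by hand and then invokes Poincar\'e duality and Hard Lefschetz for degrees $4$--$6$, where you propose trace averaging in all degrees; this is only a difference of implementation), and then delegate the explicit relations and the verification that they present the ring to a sagemath/Gr\"obner computation, exactly as the paper does.

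One step as you wrote it is not justified: you claim that $ts_1=tc_2=ts_2=0$ ``by the dimension count, since there is no room once the symmetric monomials are accounted for.'' The dimension count only shows that the $\GL(3,2)$-invariants in degrees $4,5,6$ coincide with the $S_7$-invariants, so a priori $ts_1$ could be a nonzero combination of $c_2^2,c_2s_2,s_2^2$; no contradiction arises from dimensions alone. To close this you need one more observation: the $S_7$-symmetrization of $t=s''_3-4s'_3$ is $\tfrac{28-4\cdot 7}{35}\,s_3=0$, so $t$ lies entirely in the nontrivial $S_7$-isotypic part of $A^3$; since multiplication by $s_1,c_2,s_2$ is $S_7$-equivariant, $ts_1,tc_2,ts_2$ lie in the nontrivial isotypic parts of $A^4,A^5,A^6$, which meet the (purely $S_7$-invariant) $\GL(3,2)$-invariant subspaces only in $0$. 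Alternatively, simply add $ts_1,tc_2,ts_2$ to the list of products you reduce modulo $J$ in sagemath, which your final computational step can absorb at no cost; with that, your argument is complete and matches the paper's.
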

\begin{proof}
  The irreducible characters of $\GL(3,2)$ are $\chi_1$, $\chi_3$,
  $\chi_{\bar 3}$, $\chi_6$, $\chi_7$, $\chi_8$, where the subscript
  denotes the dimension.  $\chi_1$ is the trivial representation.
  There are two basic permutation representations of $S_7$ on the $7$
  variables $z_1,\dotsc z_7$ and on the $21$ monomials $z_iz_j$ with
  $i\ne j$. The induced $\GL(3,2)$-representations are
\begin{displaymath}
  \chi\uperm_7 = \chi_1 + \chi_6, \quad \chi_{21}\uperm = \chi_1+2\chi_6+\chi_8.
\end{displaymath}
From the relations of Theorem~\ref{thm:coh-ring}, the representations
on $A^{i}$ for $i=1,2,3$ are:
\begin{enumerate}
\item $A^1=\chi_7\uperm = \chi_1 + \chi_6$. Invariant:
  $s_1$. 
\item 
  $A^2=\Sym^2(\chi_7\uperm)+\chi_1\cdot c_2 = 3\chi_1 + 3\chi_6 + \chi_8.$
  Invariants: $s_1^2$, $s_2$, $c_2$.
\item From the generators and relations of type (1) and (2) we get:
  \begin{eqnarray*}
    A^3 &=& \Sym^3(\chi) + \chi_7\uperm\cdot c_2 + \chi_1\cdot c_3
            - \chi_7\uperm - \chi_{14}\uperm   \\ 
    &=& (4\chi_1 + 7\chi_6+2\chi_7+3\chi_8) + \chi_1 -
        (\chi_1+2\chi_6+\chi_8)\\
    &=& 4\chi_1 + 5\chi_6 + 2\chi_7 + 2\chi_8.
  \end{eqnarray*}
\end{enumerate}
From this and Poincare duality, for $\dim (H^{2i})^{\GL(3,2)}$ we get
$1,1,3,4,3,1,1$. By Hard Lefshetz, $s_1^{2i}H^{6-2i} \simeq
H^{6+2i}$. This gives additive bases in the invariant subspaces of
$H^8=A^4$, $H^{10}=A^5$, $H^{12}=A^6$, and we check that our choices
(leading to smaller formulas) also give bases.  For $H^6$, we get a
subspace $s_1\cdot (H^4)^{\GL(3,2)} = \la s_1^3, c_2s_1,
s_2s_1\ra$. Since these vectors are $S_7$-invariant and $t$ is not,
adding $t$ completes them to a basis of $(H^6)^{\GL(3,2)}$.

I checked the algebra relations and the fact that they suffice in sagemath.
\end{proof}

\begin{remark}
  The smaller subring of invariants $A^*(X//G)^{S_7}$ has
  $(1,1,3,3,3,1,1)$ for the dimensions of the graded pieces. It is
  generated by $s_1$, $c_2$, $s_2$ with the same relations, dropping
  those involving $t$.
\end{remark}

For the next section, I note the relation
$c_3 = \frac17   (2 s_1^3 - 6 s_2 s_1 + 17 c_2 s_1)$.
% \begin{eqnarray*}
%   &&c_2 s_1^3 = \frac{34}{15} c_2^2 s_1,\quad 
%   c_2 s_1^4 = \frac{119}{45} c_2^3,\quad 
%   c_3 = \frac17   (2 s_1^3 - 6 s_2 s_1 + 17 c_2 s_1), \\
%   &&c_3 s_1 = \frac19 (5 c_2 s_2 - 14 c_2^2),\quad 
%   c_3 s_1^2 = \frac29 c_2^2 s_1,\quad 
%   c_3 s_1^3 = \frac7{27} c_2^3,\quad  
%   c_3^2 = \frac{19}{54} c_2^3.
% \end{eqnarray*}

\subsection{Kappa classes}
\label{sec:kappa-classes}

The moduli stack $\cM\cam$ is a global quotient $[\oM:\GL(3,2)]$,
where $\oM = (\bP^2)^7//\SL(3,\bC)$ and let $f\colon \cY\to \oM$ be
the universal family of stable pairs $(Y, \sum_{i=1}^7 \frac12 B_i)$,
see \cite{alexeev2009explicit-compactifications}. The kappa classes on
$\oM$ are
\begin{displaymath}
  \kappa_l = f_* L^{l+2}, \quad\text{where}\quad
  L = \omega_{\cY/\oM} \left( \sum_{i=1}^7 \frac12 B_i \right).
\end{displaymath}

I am grateful to William Graham for explaining to me how to do
pushforward in equivariant cohomology, which is used in the proof of
the next theorem.

\begin{theorem}
  In the Chow ring $A^*(\oM, \bQ)$ described above, one has
  \begin{eqnarray*}
    2^2\kappa_0 &=& 1\\
    2^3\kappa_1 &=& 3s_1\\
    2^4\kappa_2 &=& 6s_1^2 - c_2\\
    2^5\kappa_3 &=& 10s_1^3 - 5s_1c_2 + c_3\\
    2^6\kappa_4 &=& 15s_1^4 - 15s_1^2c_2 + c_2^2 + 6s_1c_3\\
    2^7\kappa_5 &=& 21s_1^5 - 35s_1^3c_2 + 7s_1c_2^2 + 21s_1^2c_3 - 2c_2c_3\\
    2^8\kappa_6 &=& 28s_1^6 - 70s_1^4c_2 + 28s_1^2c_2^2 - c_2^3
    + 56s_1^3c_3 - 16s_1c_2c_3 + c_3^2
  \end{eqnarray*}
\end{theorem}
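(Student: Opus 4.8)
The plan is to compute the $\bQ$-line bundle $L = \omega_{\cY/\oM}(\sum\frac12 B_i)$ on the universal family $\cY \to \oM = (\bP^2)^7//\SL(3)$ and then push forward its powers $L^{l+2}$ using equivariant cohomology. As recalled in the excerpt, $\cY$ is the GIT quotient of the incidence family $\{(p_1,\dots,p_7,\ell) : p_i \in \bP^2,\ \ell \in (\bP^2)^\vee\}$ inside $(\bP^2)^7 \times (\bP^2)^\vee$, polarized by $(1,\dots,1,\epsilon)$, $0 < \epsilon \ll 1$. Working equivariantly for $G = \SL(3)$, the equivariant Chow ring of $(\bP^2)^7 \times (\bP^2)^\vee$ is $S^W[z_1,\dots,z_7,w]$ modulo the basic relations $z_i^3 + c_2 z_i + c_3$ and $w^3 - c_2 w + c_3$ (the sign on $c_2$, $c_3$ for the dual projective space being the point to get right), where $w = c_1(\cO_{(\bP^2)^\vee}(1))$. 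The boundary divisors $B_i \subset \cY$ are the incidence loci $\{p_i \in \ell\}$; each has class $z_i + w$ (again modulo getting the incidence sign convention correct, matching Grothendieck vs.\ lines-in-$V$ as flagged in the remark after Theorem~\ref{thm:coh-ring}). The relative dualizing sheaf $\omega_{\cY/\oM}$ is the dualizing sheaf of the $(\bP^2)^\vee$-bundle direction, which is $\cO(-3w)$ twisted appropriately, so that on $\cY$ one gets $\omega_{\cY/\oM} = -3w + (\text{correction})$ and hence
\begin{displaymath}
  L = \omega_{\cY/\oM} + \sum_{i=1}^7 \tfrac12(z_i + w) = \tfrac12 s_1 + c w
\end{displaymath}
for an explicit constant $c$; in fact the $\bZ_2^3$-cover normalization forces the overall powers of $2$ seen in the statement, consistent with $2\kappa_0 \cdot 2 = 1$.

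Next I would carry out the pushforward. The map $f \colon \cY \to \oM$ is the family version of the projection $(\bP^2)^7 \times (\bP^2)^\vee \to (\bP^2)^7$, restricted to the incidence variety and then GIT-quotiented; its fiber is a line in $(\bP^2)^\vee$, i.e.\ a $\bP^1$. Equivariant pushforward along this $\bP^1$-bundle is integration over the fiber: writing $L = a + bw$ with $a = \frac12 s_1$ (pulled back from $\oM$) and $b$ the $w$-coefficient, one has
\begin{displaymath}
  f_*(L^{l+2}) = f_*\!\left( \sum_{j} \binom{l+2}{j} a^{l+2-j} b^j w^j \right),
\end{displaymath}
and $f_* w^j$ is computed from the relation $w^3 = c_2 w - c_3$ restricted to the fiber (a conic or line in $(\bP^2)^\vee$) together with $f_* 1 = 0$, $f_* w = $ (the degree of the fiber). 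Concretely the fiber of $\cY \to \oM$ over a point is the dual line $\ell^\vee \cong \bP^1 \subset (\bP^2)^\vee$, so $f_* w^j = $ the intersection number of $j$ hyperplanes with $\bP^1$, which is $0$ for $j = 0$, $1$ for $j = 1$, and for $j \ge 2$ is reduced using $w^2 = (s_1\text{-type linear terms})w - (\text{quadratic})$ coming from the relation of the ambient $(\bP^2)^\vee$. The upshot is a closed formula $2^{l+2}\kappa_l = \sum_j \binom{l+2}{j}(\text{monomial in } s_1, c_2, c_3)$, and the displayed answers — with binomial coefficients $\binom{l+2}{1}, \binom{l+2}{2}, \dots$ visible as the coefficients $3, 6, 10, 15, 21, 28$ of the leading $s_1$ powers — are exactly of this shape. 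Finally I would use the relation $c_3 = \frac17(2s_1^3 - 6s_2 s_1 + 17 c_2 s_1)$ noted before the subsection only if one wants the answer purely in terms of the invariant generators; as stated the theorem keeps $c_3$, so this last substitution is optional.

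The main obstacle, and the step requiring genuine care rather than bookkeeping, is pinning down \emph{all} the conventions and normalizations simultaneously: (i) the sign of $c_2, c_3$ in the basic relation for $(\bP^2)^\vee$ versus $\bP^2$; (ii) the class of the incidence divisor $B_i$ as $z_i + w$ or $z_i - w$ (or with an $\epsilon$-correction from the polarization) — since a sign error here propagates into every $\kappa_l$; (iii) the expression for $\omega_{\cY/\oM}$ including the contribution of the incidence condition to the relative dualizing sheaf (adjunction: $\omega_{\text{incidence}} = \omega_{\text{ambient}} \otimes \cO(B)|_B$-type formula applied fiberwise); and (iv) the factor of $2^{l+2}$, which must come out of the $\bZ_2^3$-cover relationship described in Section~\ref{sec:covers} (each of the three $\bZ_2$-layers contributing, and the extra $2^2 = 2^d$ from $\kappa_0 = \frac14$ rather than the naive volume). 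Once these are fixed, I would sanity-check against the low-degree cases $\kappa_0 = \frac14$ (volume of $(\bP^2, \frac12 \sum B_i)$ is $(K + \frac12 \cdot 7 H)^2 = (-3 + \frac72)^2 = \frac14$, confirming $2^2\kappa_0 = 1$) and $\kappa_1 = \frac38 s_1$ (the CM line bundle, which should be a positive multiple of the natural ample class $s_1$ on the GIT quotient), and then verify the general formula by the binomial expansion above, doing the elementary-symmetric-polynomial reductions in sagemath exactly as in the proofs of Theorems~\ref{thm:coh-ring} and~\ref{thm:inv-coh-ring}. All the arithmetic is routine once Graham's equivariant-pushforward recipe is set up; the conceptual work is entirely in the first paragraph.
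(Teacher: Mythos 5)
Your overall framework (work $\SL(3)$-equivariantly on $(\bP^2)^7\times(\bP^2)^\vee$, identify the incidence divisors with $z_i+w$, expand $L^{l+2}$ and push forward using the degree-$3$ relation in the cohomology of the dual plane) is the same as the paper's, but there is a genuine error in the key step: you misidentify the universal family. The fiber of $f\colon\cY\to\oM$ over a point is the stable \emph{surface} of the pair $(Y,\sum\frac12 B_i)$ — generically the dual plane $\bP(V^*)\cong\bP^2$ containing the seven lines — not a line $\ell^\vee\cong\bP^1$; indeed $\cY$ is the GIT quotient of the whole product $(\bP^2)^7\times\bP(V^*)$, not of an incidence variety, and the $B_i$ are the incidence divisors \emph{inside} it. Consequently the pushforward rules you propose ($f_*1=0$, $f_*w=$ degree of the fiber) are wrong: the correct ones are $f_*(s)=f_*(ws)=0$, $f_*(w^2s)=s$ for $s$ pulled back from the base, with higher powers reduced via $w^3+c_2w-c_3=0$ (note also your sign on this relation is off). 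With your $\bP^1$-fiber setup $f_*(L^{l+2})$ would land in codimension $l+1$ rather than $l$, so the computation cannot even reproduce $\kappa_0=\tfrac14$ — which you correctly verify by hand, in contradiction with your own setup.

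Two smaller points. First, there is no undetermined constant $c$ and no adjunction correction: since $\cY$ is (the quotient of) the product, $\omega_{\cY/\oM}$ restricted to the $\bP(V^*)$-direction is just $-3w$, so $L=-3w+\sum_{i=1}^7\tfrac12(z_i+w)=\tfrac12(w+s_1)$ directly. Second, the factors $2^{l+2}$ in the statement have nothing to do with the $\bZ_2^3$-cover of Section~4: the theorem computes the kappa classes of the pair family itself, and the powers of $2$ merely clear the $\tfrac12$ in $L$, i.e.\ $2^{l+2}\kappa_l=f_*\big((w+s_1)^{l+2}\big)$. Once the fiber and the pushforward rules are corrected, the binomial expansion does give the stated formulas (the leading coefficients are $\binom{l+2}{2}$, consistent with a two-dimensional fiber), exactly as in the paper's proof.
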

\begin{proof}
  Over $X = \bP (V)^7$ we have the universal family $X\times \bP (V^*)$
  with seven divisors $\cB_i$, and the family over $\oM$ is a quotient
  by a free action of $G=\SL(V)$. Therefore, it suffices to compute in
  $A^*_G(X)$. Denote $h=\cO_{\bP(V^*)}(1)$. Each $\cB_i$ is the incidence
  divisor in $\bP(V)\times\bP(V^*)$ and is linearly equivalent to
  $z_i+h$.  Therefore, 
  \begin{displaymath}
    L = \omega_{X\times\bP(V^*)/ X} \left( \sum_{i=1}^7 \frac12 B_i \right) = 
    -3h + \sum_{i=1}^7\frac12(h+z_i) = \frac12(h+s_1).
  \end{displaymath}
  By projection formula, to compute $f_*L^{i+2}$ it suffices to know
  $f_* h^k$ under the homomorphism $A^*_G(\bP(V^*))\to A^*_G(\cdot)=S$
  induced by the morphism
  $\bP(V^*)\to{\rm pt}$. For $s\in S^W$ one has $f_*(s)=f_*(hs)=0$,
  $f_*(h^2s)=s$, and the pushforwards of higher powers of $h$ follow by
  recursively using the basic relation $h^3 + c_2h - c_3=0$. The rest
  is an easy computation.
\end{proof}

Note that $s_1$ is the ample line bundle that comes with the GIT
quotient construction, the $\cO(1)$ on the $\Proj$ of the graded
algebra of invariants.

\bibliographystyle{amsalpha}
%\bibliography{va} 

\newcommand{\etalchar}[1]{$^{#1}$}
\def\cprime{$'$}
\providecommand{\bysame}{\leavevmode\hbox to3em{\hrulefill}\thinspace}
\providecommand{\MR}{\relax\ifhmode\unskip\space\fi MR }
% \MRhref is called by the amsart/book/proc definition of \MR.
\providecommand{\MRhref}[2]{%
  \href{http://www.ams.org/mathscinet-getitem?mr=#1}{#2}
}
\providecommand{\href}[2]{#2}

\end{document}